\newcommand{\Z}{\mathbb{Z}}
\newcommand{\C}{\mathbb{C}}
\newcommand{\R}{\mathbb{R}}
\theoremstyle{definition}
\newtheorem{definition}{Definition}
\theoremstyle{remark}
\newtheorem{remark}{Remark}
\theoremstyle{plain}
\newtheorem{theorem}{Theorem}
\newtheorem{proposition}{Proposition}
\newtheorem{lemma}{Lemma}
\newtheorem{corollary}{Corollary}
\numberwithin{equation}{section}
\numberwithin{lemma}{section}
\numberwithin{definition}{section}
\numberwithin{proposition}{section}
\numberwithin{corollary}{section}
\numberwithin{theorem}{section}
\numberwithin{remark}{section}
\title{On the heat kernel of a Cayley graph of $\operatorname{PSL}_2\Z$}
\author{Anders Karlsson}
\address{Section de mathématiques, Université de Genève, rue du Conseil-Général 7-9, 1205 Genève, Suisse;  Matematiska institutionen, Uppsala universitet, Box 256, 751 05, Uppsala, Sweden}
\email{anders.karlsson@unige.ch}
\author{Kamila Kashaeva}
\address{Section de mathématiques, Université de Genève, rue du Conseil-Général 7-9, 1205 Genève, Suisse}
\email{kamila.kashaeva@unige.ch}
\date{}
\begin{document}

\maketitle

\begin{abstract}
In this paper, we obtain an explicit formula for the heat kernel on the Cayley graph of the modular group $\operatorname{PSL}_2\Z$, given by the presentation $\langle a,b\mid a^2=1,  b^3=1\rangle$. Our approach extends a method of Chung--Yau by observing that the Cayley graph strongly and regularly covers a weighted infinite line. We solve the spectral problem on this line to obtain an integral expression for its heat kernel, and then lift this to the Cayley graph using spectral transfer principles for strongly regular coverings. The explicit formula allows us to determine the Laplace spectrum, containing eigenvalues and continuous parts. As a by-product, we suggest a conjecture on the lower bound for the spectral gap of Cayley graphs of $\operatorname{PSL}_2\mathbb{F}_p$ with our generators, inspired by the analogy with Selberg's $1/4$-conjecture. Numerical evidence to this conjecture is provided for small primes.
\end{abstract}

\section{Introduction}
There are not that many examples of infinite graphs with explicitly known spectrum and heat kernel. For instance, Chung–Yau~\cite{MR1667452}, Cowling--Meda--Setti~\cite{MR1653343}, 
and Chinta–Jorgenson--Karlsson~\cite{MR3394421} provided explicit formulas for the heat kernel of the infinite $k$-regular tree, see also~\cite{MR5005075} which provides a general method for explicit heat kernels on infinite graphs. A wealth of examples of spectra of infinite graphs can be found in a recent paper by Grigorchuk--Nagnibeda--Pérez \cite{MR4458570} and references therein. 

In this paper, we contribute the example of a Cayley graph of the group $\operatorname{PSL}_2\Z$. More specifically, the Cayley graph $\Gamma$ associated to the following group presentation
\begin{equation}\label{group}
 G=\operatorname{PSL}_2\Z \cong C_2*C_3=\langle a,b\mid a^2=1,  b^3=1\rangle,
\end{equation}
with $\Gamma$ drawn in Fig.~\ref{fig:cayley}.
\begin{figure}[h]
\center
 \begin{tikzpicture}[scale=1]

\coordinate (0) at (0,0);
\coordinate (1_1) at (1,0.5);
\coordinate (1_2) at (1,-0.5);
\coordinate (2_1) at (2,1.2);
\coordinate (2_2) at (2,-1.2);

\begin{scope}[shift={(2,1.2)}]
\coordinate (3_1) at (1,0.5);
\coordinate (3_2) at (1,-0.5);
\coordinate (4_1) at (2,1.2);
\coordinate (4_2) at (2,-1);
\node at (2.5,1.2){$\dots$};
\node at (2.5,-1){$\dots$};
\end{scope}

\begin{scope}[shift={(2,-1.2)}]
\coordinate (3_3) at (1,0.5);
\coordinate (3_4) at (1,-0.5);
\coordinate (4_3) at (2,1);
\coordinate (4_4) at (2,-1.2);
\node at (2.5,1){$\dots$};
\node at (2.5,-1.2){$\dots$};
\end{scope}

\coordinate (-1) at (-1,0);
\begin{scope}[shift={(-1,0)}]
\coordinate (-2_1) at (-1,0.5);
\coordinate (-2_2) at (-1,-0.5);
\coordinate (-3_1) at (-2,1.2);
\coordinate (-3_2) at (-2,-1.2);
\end{scope}

\begin{scope}[shift={(-3,1.2)}]
\coordinate (-4_1) at (-1,0.5);
\coordinate (-4_2) at (-1,-0.5);
\coordinate (-5_1) at (-2,1.2);
\coordinate (-5_2) at (-2,-1);
\node at (-2.5,1.2){$\dots$};
\node at (-2.5,-1){$\dots$};
\end{scope}

\begin{scope}[shift={(-3,-1.2)}]
\coordinate (-4_3) at (-1,0.5);
\coordinate (-4_4) at (-1,-0.5);
\coordinate (-5_3) at (-2,1);
\coordinate (-5_4) at (-2,-1.2);
\node at (-2.5,1){$\dots$};
\node at (-2.5,-1.2){$\dots$};
\end{scope}

\draw[fill=black] (0) circle [radius=2pt];
\draw[fill=black] (1_1) circle [radius=2pt];
\draw[fill=black] (1_2) circle [radius=2pt];
\draw[fill=black] (2_1) circle [radius=2pt];
\draw[fill=black] (2_2) circle [radius=2pt];
\draw[fill=black] (3_1) circle [radius=2pt];
\draw[fill=black] (3_2) circle [radius=2pt];
\draw[fill=black] (3_3) circle [radius=2pt];
\draw[fill=black] (3_4) circle [radius=2pt];
\draw[fill=black] (4_1) circle [radius=2pt];
\draw[fill=black] (4_2) circle [radius=2pt];
\draw[fill=black] (4_3) circle [radius=2pt];
\draw[fill=black] (4_4) circle [radius=2pt];
\draw[fill=black] (-1) circle [radius=2pt];
\draw[fill=black] (-2_1) circle [radius=2pt];
\draw[fill=black] (-2_2) circle [radius=2pt];
\draw[fill=black] (-3_1) circle [radius=2pt];
\draw[fill=black] (-3_2) circle [radius=2pt];
\draw[fill=black] (-4_1) circle [radius=2pt];
\draw[fill=black] (-4_2) circle [radius=2pt];
\draw[fill=black] (-4_3) circle [radius=2pt];
\draw[fill=black] (-4_4) circle [radius=2pt];
\draw[fill=black] (-5_1) circle [radius=2pt];
\draw[fill=black] (-5_2) circle [radius=2pt];
\draw[fill=black] (-5_3) circle [radius=2pt];
\draw[fill=black] (-5_4) circle [radius=2pt];

\draw(0)--(1_1)--(1_2)--(0);
\draw (1_1) to[out=45,in= 195] (2_1); \draw (1_1) to[out=15,in= 225] (2_1);
\draw (1_2) to[out=-15,in=135] (2_2); \draw (1_2) to[out=-45,in= 165] (2_2);
\draw(2_1)--(3_1)--(3_2)--(2_1);
\draw(2_2)--(3_3)--(3_4)--(2_2);
\draw (3_1) to[out=45,in= 195] (4_1); \draw (3_1) to[out=15,in= 225] (4_1);
\draw (3_2) to[out=-15,in=135] (4_2); \draw (3_2) to[out=-45,in= 165] (4_2);
\draw (3_3) to[out=45,in= 195] (4_3); \draw (3_3) to[out=15,in= 225] (4_3);
\draw (3_4) to[out=-15,in=135] (4_4); \draw (3_4) to[out=-45,in= 165] (4_4);

\draw (0) to[out=165,in= 15] (-1); \draw (0) to[out=-165,in= -15] (-1);
\draw(-1)--(-2_1)--(-2_2)--(-1);
\draw (-2_1) to[out=135,in= -15] (-3_1); \draw (-2_1) to[out=165,in=-45] (-3_1);
\draw (-2_2) to[out=195,in=45] (-3_2); \draw (-2_2) to[out=225,in= 15] (-3_2);
\draw(-3_1)--(-4_1)--(-4_2)--(-3_1);
\draw(-3_2)--(-4_3)--(-4_4)--(-3_2);
\draw (-4_1) to[out=135,in= -15] (-5_1); \draw (-4_1) to[out=165,in=-45] (-5_1);
\draw (-4_2) to[out=195,in=45] (-5_2); \draw (-4_2) to[out=225,in= 15] (-5_2);
\draw (-4_3) to[out=135,in= -15] (-5_3); \draw (-4_3) to[out=165,in=-45] (-5_3);
\draw (-4_4) to[out=195,in=45] (-5_4); \draw (-4_4) to[out=225,in= 15] (-5_4);

\node at (-1,0.2) {\tiny$a$};
\node at (0,0.2) {\tiny$e$};
\node at (1,0.7) {\tiny$b$};
\node at (1,-0.7) {\tiny$b^2$};
\node at (2,1.4) {\tiny$ba$};
\node at (2,-1.4) {\tiny$b^2a$};
\node at (2.9,1.9) {\tiny$bab$};
\node at (2.9,0.5) {\tiny$bab^2$};
\node at (2.8,-0.5) {\tiny$b^2ab$};
\node at (2.8,-1.9) {\tiny$b^2ab^2$};

\node at (-2,0.7) {\tiny$ab$};
\node at (-2,-0.7) {\tiny$ab^2$};
\node at (-3,1.4) {\tiny$aba$};
\node at (-3,-1.4) {\tiny$ab^2a$};

\node at (-3.8,1.9) {\tiny$abab$};
\node at (-3.8,0.5) {\tiny$abab^2$};
\node at (-3.7,-0.5) {\tiny$ab^2ab$};
\node at (-3.8,-1.9) {\tiny$ab^2ab^2$};

\end{tikzpicture}
\caption{The Cayley graph of $\operatorname{PSL}_2\Z \simeq \langle a,b\mid a^2=1,  b^3=1\rangle$.} \label{fig:cayley}
\end{figure}
Our definition of $\Gamma$ is coherent with Serre's definition in \cite{MR1954121} making it a quotient of the Cayley graph of the rank 2 free group (the $4$-regular tree). Specifically, we associate double edges to the generating element $a$ of order 2. 

We explicitly solve the spectral problem for a projected image of the Laplacian on a line.
This allows us to provide an integral formula for the heat kernel of $\Gamma$ by elaborating on Chung–Yau’s method and extending their results on the relation between the heat kernels of two graphs related through a strong and regular covering.

One potential application of our result is the study of expander graphs. There has been an intense interest in finite quotients of $\operatorname{PSL}_2\Z$ and corresponding Cayley graphs with a fixed set of generators, see for example \cite{MR2569682, MR3144176, MR3348442}, or the seminal paper by Bourgain--Gamburd \cite{MR2415383}. 

The value
$$
\lambda_0:=\frac78-\frac12\sqrt{\frac{25}{16}+\sqrt{2}}=0.01234\dots
$$
is the bottom of the spectrum of the Laplacian on $\Gamma$ just as $1/4$ is for the Laplacian on $\mathbb{H}^2$. This can be compared with the work of Selberg \cite{MR182610}, where he conjectured that the smallest non-zero eigenvalue of the Laplacian on hyperbolic surfaces given as quotients of the upper half-plane $\mathbb{H}^2$ by congruence subgroups of $\operatorname{SL}_2\Z$ is bounded below by $1/4$. 
He also proved the lower bound $3/16$. Our numerical computations indicate that $\lambda_0$ is a lower bound for the first non-zero eigenvalue of the Laplacian of the Cayley graphs of $\operatorname{PSL}_2\mathbb{F}_p$ (with the same set of generators) for many primes $p$. In Fig.~\ref{fig:specL}, we plot the spectra for the first four primes. Further computations indicate that the first prime for which the first non-zero eigenvalue falls below $\lambda_0$ is $p=37$. 
We conjecture that $\lambda_0$ is the limiting value of the spectral gaps for the family $\operatorname{PSL}_2\mathbb{F}_p$.

Magee informed us that he made more general conjectures of similar type in his lecture \cite{magee_optimal_spectral_gaps_2024} in 2024, after which Sarnak pointed out that related conjectures to ours date back to a paper by Buck \cite{MR830648}. For further information, we also refer to the recent paper \cite{MR3985838}. 

\begin{figure}[h]
\center
\begin{tikzpicture}[xscale=7.5,line/.style = {draw,ultra thick, shorten >=-2pt, shorten <=-2pt}]
\draw[-latex] (-0.1,0) -- (1.9,0);

\node at (1.895,-0.25) {\small$\R$};

\node at (0,-0.4) {\small$0$}; \draw (0,0.15) -- (0,-0.15);

\node at (0.75,-0.4) {$\frac{3}{4}$};
\node at (1.75,-0.4) {$\frac{7}{4}$};

\draw[-latex] (0.02,0.5) -- (0.02,0.1);
\node at (0.02,0.7) {\small$\lambda_0$};

\draw[-latex] (0.68,0.5) -- (0.68,0.1);
\node at (0.68,0.7) {\small$\frac74-\lambda_1$};

\draw[-latex] (1.0675,0.5) -- (1.0675,0.1);
\node at (1.0675,0.7) {\small$\lambda_1$};

\draw[-latex] (1.73,0.5) -- (1.73,0.1);
\node at (1.73,0.7) {\small$\frac74-\lambda_0$};

\draw[line, {Circle[length=4pt]}-{Circle[length=4pt]}]  (0.75,0) -- (0.75,0);
\draw[line, {Circle[length=4pt]}-{Circle[length=4pt]}]  (1.75,0) -- (1.75,0);

\draw[line, {Circle[length=4pt]}-{Circle[length=4pt]}]  (0.02,0) -- (0.68,0);
\draw[line, {Circle[length=4pt]}-{Circle[length=4pt]}]  (1.0675,0) -- (1.73,0);

\draw[-latex] (-0.1,-1.55) -- (1.9,-1.55);

\node at (1.895,-1.8) {\small$\R$};

\draw (0,-1.35) -- (0,-1.7);
\draw[dashed] (0,-0.6) -- (0,-1.4); 
\draw[dashed] (0.75,-0.75) -- (0.75,-1.3); 
\draw[dashed] (1.75,-0.75) -- (1.75,-1.3); 
\draw[dashed] (0.02,-0.15) -- (0.02,-1.35); \draw (0.02,-1.45) -- (0.02,-1.65);
\draw[dashed] (0.68,-0.15) -- (0.68,-1.35); \draw (0.68,-1.45) -- (0.68,-1.65);
\draw[dashed] (1.0675,-0.15) -- (1.0675,-1.35); \draw (1.0675,-1.45) -- (1.0675,-1.65);
\draw[dashed] (1.73,-0.15) -- (1.73,-1.35); \draw (1.73,-1.45) -- (1.73,-1.65);

\draw[green!60!gray, line, {Circle[length=3.5pt]}-{Circle[length=3.5pt]}] (0,-1.325) -- (0,-1.325);
\draw[green!60!gray, line, {Circle[length=3.5pt]}-{Circle[length=3.5pt]}] (0.75,-1.325) -- (0.75,-1.325);
\draw[green!60!gray, line, {Circle[length=3.5pt]}-{Circle[length=3.5pt]}] (1,-1.55) -- (1,-1.55);
\draw[green!60!gray, line, {Circle[length=3.5pt]}-{Circle[length=3.5pt]}] (1.75,-1.325) -- (1.75,-1.325);

\draw[orange, line, {Circle[length=3.5pt]}-{Circle[length=3.5pt]}] (0,-1.47) -- (0,-1.47);
\draw[orange, line, {Circle[length=3.5pt]}-{Circle[length=3.5pt]}] (0.36,-1.55) -- (0.36,-1.55);
\draw[orange, line, {Circle[length=3.5pt]}-{Circle[length=3.5pt]}] (0.75,-1.47) -- (0.75,-1.47);
\draw[orange, line, {Circle[length=3.5pt]}-{Circle[length=3.5pt]}] (1.39,-1.55) -- (1.39,-1.55);
\draw[orange, line, {Circle[length=3.5pt]}-{Circle[length=3.5pt]}] (1.75,-1.47) -- (1.75,-1.47);

\draw[cyan, line, {Circle[length=3.5pt]}-{Circle[length=3.5pt]}] (0,-1.63) -- (0,-1.63);
\draw[cyan, line, {Circle[length=3.5pt]}-{Circle[length=3.5pt]}] (0.056,-1.55) -- (0.056,-1.55);
\draw[cyan, line, {Circle[length=3.5pt]}-{Circle[length=3.5pt]}] (0.075,-1.55) -- (0.075,-1.55);
\draw[cyan, line, {Circle[length=3.5pt]}-{Circle[length=3.5pt]}] (0.157,-1.55) -- (0.157,-1.55);
\draw[cyan, line, {Circle[length=3.5pt]}-{Circle[length=3.5pt]}] (0.165,-1.55) -- (0.165,-1.55);
\draw[cyan, line, {Circle[length=3.5pt]}-{Circle[length=3.5pt]}] (0.25,-1.55) -- (0.25,-1.55);
\draw[cyan, line, {Circle[length=3.5pt]}-{Circle[length=3.5pt]}] (0.5,-1.55) -- (0.5,-1.55);
\draw[cyan, line, {Circle[length=3.5pt]}-{Circle[length=3.5pt]}] (0.538,-1.55) -- (0.538,-1.55);
\draw[cyan, line, {Circle[length=3.5pt]}-{Circle[length=3.5pt]}] (0.542,-1.55) -- (0.542,-1.55);
\draw[cyan, line, {Circle[length=3.5pt]}-{Circle[length=3.5pt]}] (0.73,-1.55) -- (0.73,-1.55);
\draw[cyan, line, {Circle[length=3.5pt]}-{Circle[length=3.5pt]}] (0.75,-1.63) -- (0.75,-1.63);
\draw[cyan, line, {Circle[length=3.5pt]}-{Circle[length=3.5pt]}] (1.21,-1.55) -- (1.21,-1.55);
\draw[cyan, line, {Circle[length=3.5pt]}-{Circle[length=3.5pt]}] (1.25,-1.55) -- (1.25,-1.55);
\draw[cyan, line, {Circle[length=3.5pt]}-{Circle[length=3.5pt]}] (1.26,-1.55) -- (1.26,-1.55);
\draw[cyan, line, {Circle[length=3.5pt]}-{Circle[length=3.5pt]}] (1.49,-1.55) -- (1.49,-1.55);
\draw[cyan, line, {Circle[length=3.5pt]}-{Circle[length=3.5pt]}] (1.5,-1.55) -- (1.5,-1.55);
\draw[cyan, line, {Circle[length=3.5pt]}-{Circle[length=3.5pt]}] (1.58,-1.55) -- (1.58,-1.55);
\draw[cyan, line, {Circle[length=3.5pt]}-{Circle[length=3.5pt]}] (1.59,-1.55) -- (1.59,-1.55);
\draw[cyan, line, {Circle[length=3.5pt]}-{Circle[length=3.5pt]}] (1.69,-1.55) -- (1.69,-1.55);
\draw[cyan, line, {Circle[length=3.5pt]}-{Circle[length=3.5pt]}] (1.75,-1.63) -- (1.75,-1.63);

\draw[red, line, {Circle[length=3.5pt]}-{Circle[length=3.5pt]}] (0,-1.775) -- (0,-1.775);
\draw[red, line, {Circle[length=3.5pt]}-{Circle[length=3.5pt]}] (0.043,-1.55) -- (0.043,-1.55);
\draw[red, line, {Circle[length=3.5pt]}-{Circle[length=3.5pt]}] (0.11,-1.55) -- (0.11,-1.55);
\draw[red, line, {Circle[length=3.5pt]}-{Circle[length=3.5pt]}] (0.12,-1.55) -- (0.12,-1.55);
\draw[red, line, {Circle[length=3.5pt]}-{Circle[length=3.5pt]}] (0.25,-1.55) -- (0.25,-1.55);
\draw[red, line, {Circle[length=3.5pt]}-{Circle[length=3.5pt]}] (0.29,-1.55) -- (0.29,-1.55);
\draw[red, line, {Circle[length=3.5pt]}-{Circle[length=3.5pt]}] (0.41,-1.55) -- (0.41,-1.55);
\draw[red, line, {Circle[length=3.5pt]}-{Circle[length=3.5pt]}] (0.5,-1.55) -- (0.5,-1.55);
\draw[red, line, {Circle[length=3.5pt]}-{Circle[length=3.5pt]}] (0.75,-1.775) -- (0.75,-1.775);
\draw[red, line, {Circle[length=3.5pt]}-{Circle[length=3.5pt]}] (1.14,-1.55) -- (1.14,-1.55);
\draw[red, line, {Circle[length=3.5pt]}-{Circle[length=3.5pt]}] (1.25,-1.55) -- (1.25,-1.55);
\draw[red, line, {Circle[length=3.5pt]}-{Circle[length=3.5pt]}] (1.34,-1.55) -- (1.34,-1.55);
\draw[red, line, {Circle[length=3.5pt]}-{Circle[length=3.5pt]}] (1.46,-1.55) -- (1.46,-1.55);
\draw[red, line, {Circle[length=3.5pt]}-{Circle[length=3.5pt]}] (1.5,-1.55) -- (1.5,-1.55);
\draw[red, line, {Circle[length=3.5pt]}-{Circle[length=3.5pt]}] (1.63,-1.55) -- (1.63,-1.55);
\draw[red, line, {Circle[length=3.5pt]}-{Circle[length=3.5pt]}] (1.64,-1.55) -- (1.64,-1.55);
\draw[red, line, {Circle[length=3.5pt]}-{Circle[length=3.5pt]}] (1.71,-1.55) -- (1.71,-1.55);
\draw[red, line, {Circle[length=3.5pt]}-{Circle[length=3.5pt]}] (1.75,-1.775) -- (1.75,-1.775);

\node at (0.09,-2.5) {\small $p=2$,};
\draw[green!60!gray, line, {Circle[length=3.5pt]}-{Circle[length=3.5pt]}] (0,-2.5) -- (0,-2.5);
\node at (0.3,-2.5) {\small $p=3$,};
\draw[orange, line, {Circle[length=3.5pt]}-{Circle[length=3.5pt]}] (0.21,-2.5) -- (0.21,-2.5);
\node at (0.495,-2.5) {\small $p=5$,};
\draw[cyan, line, {Circle[length=3.5pt]}-{Circle[length=3.5pt]}] (0.41,-2.5) -- (0.41,-2.5);
\node at (0.695,-2.5) {\small $p=7$};
\draw[red, line, {Circle[length=3.5pt]}-{Circle[length=3.5pt]}] (0.61,-2.5) -- (0.61,-2.5);
\end{tikzpicture}
\caption{The spectra of Laplacians of $\operatorname{PSL}_2\Z$ (drawn in black) and $\operatorname{PSL}_2\mathbb{F}_p$, for $p=2,3,5,7$, where $\lambda_1=\frac78+\frac12\sqrt{\frac{25}{16}-\sqrt{2}}=1.0675\dots$.}  \label{fig:specL} 
\end{figure}

Kowalski in~\cite{MR3144176} proves explicit very small bounds for the spectral gap of families of Cayley graphs of $\operatorname{SL}_2 \mathbb{F}_p$. See \cite{MR4418463} for more recent results.

There is also a direct link between Selberg's conjecture and spectra of Cayley graphs of $\operatorname{SL}_2 \mathbb{F}_p$, see Helfgott \cite{MR3348442} section 5.5 for a discussion and references for this connection.

We now describe our main result.

Let $\mathcal{L}$ be the (normalized) Laplacian on $\Gamma$, which is a self-adjoint bounded linear operator in the Hilbert space $\ell^2(G)$ defined by

\begin{equation}\label{Laplacian_G}
(\mathcal{L} f)(x)=f(x)-\frac{1}{2}f(xa)-\frac{1}{4}f(xb)-\frac{1}{4}f(xb^{-1}),\quad \forall x\in G.
\end{equation}
The heat operator of $\Gamma$ is the exponential of the Laplacian, $h_t=e^{-t\mathcal{L}}$. The invariance of $\Gamma$ under left translations implies that
the heat kernel is described in terms of a function $k_t\colon G\to \R$ through the formula
\begin{equation}\label{heat_kernel_det}
(h_tf)(x)=\sum_{y\in G}k_t(y^{-1}x)f(y),\quad f\in \ell^2(G).
\end{equation}
Denote by $|x|$  the shortest word length of $x\in G$, and define the map
$\pi\colon G\to\Z$
by 
\begin{equation}\label{CY_proj}
 \pi(x)=
\begin{cases} 
0 & \text{if $x=e$} \\
 |x| & \text{if the reduced form of $x$ starts with letters $b^{\pm1}$} \\
 -|x| & \text{if the reduced form of $x$ starts with letter $a$}.
\end{cases}
\end{equation}

\begin{theorem} \label{K1}
The function $k_t$ determining the heat kernel in~\eqref{heat_kernel_det} is given by
${k}_t(x)=K_t(\pi(x))$, where, for $n\in\Z$,
\begin{equation} \label{eqK1}
 K_t(n)= e^{-t\frac{3}{4}} \alpha_n +e^{-t\frac{7}{4}} \beta_n + \int_0^\pi e^{-t(\frac{7}{8}-\frac{R_s}{2})}\gamma_n^-(s) \mathrm{d}s + \int_0^\pi e^{-t(\frac{7}{8}+\frac{R_s}{2})}\gamma_n^+(s) \mathrm{d}s,
\end{equation}
where the coefficients $\alpha_n,\beta_n$ and $\gamma^{\pm}_n(s)$ are defined depending on the sign and the parity of $n$. In the formulas with double lines below, the first line corresponds to even $n=2m$, and the second line to odd $n=2m+1$.

For $n\ge0$,
$$
\alpha_n= \frac{(-1)^{\left\lceil\frac{n}{2}\right\rceil}2^{- \left\lceil\frac{n}{2}\right\rceil }}{6},
\quad
\beta_n=\frac{(-1)^n 2^{- \left\lceil\frac{n}{2}\right\rceil }}{6},
$$
$$
\gamma_n^{\pm}(s) = \frac{\sqrt{2}^{-\left\lceil\frac{n}{2}\right\rceil} \sin(s)}{\pi R_s(1+8\sin^2(s))}
\begin{cases}
\mp(\sqrt{2}+\cos(s)) \sin(ms) + 4R_s\sin(s) \cos(ms) \\ 
\pm(4+2\sqrt{2}\cos(s)) \sin(ms) + \big(\sqrt{2}R_s\mp(\frac{9\sqrt{2}}{4}+4\cos(s))\big) \sin((m+1)s), 
\end{cases}
$$
and, for $n<0$,
$$
\alpha_n= \frac{(-1)^{\left\lceil\frac{n}{2}\right\rceil}}{6},
\quad
\beta_n=\frac{(-1)^n }{6},
$$
$$
\gamma_n^{\pm}(s)=\frac{\sqrt{2}^{-\left\lceil\frac{n}{2}\right\rceil} \sin(s)}{\pi R_s (1+8\sin^2(s))} 
\begin{cases}
\pm(\sqrt{2}+\cos(s)) \sin(ms) + 4R_s\sin(s)\cos(ms) \\
\pm(4+2\sqrt{2}\cos(s)) \sin(ms) - \big(\sqrt{2}R_s\pm(\frac{9\sqrt{2}}{4}+4\cos(s))\big) \sin((m+1)s), 
\end{cases} 
$$
where 
\begin{equation}\label{R_s}
R_s=\sqrt{\frac{25}{16}+\sqrt{2}\cos(s)}.
\end{equation}
\end{theorem}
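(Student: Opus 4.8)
The plan is to transport the entire computation to the weighted line $\Z$ through $\pi$ and diagonalise there. Since $\pi^{-1}(0)=\{e\}$, the source $\delta_e$ is itself $\pi$-radial, and $k_t=e^{-t\mathcal L}\delta_e$ by \eqref{heat_kernel_det}. First I would verify, by evaluating \eqref{Laplacian_G} at the three vertices of each triangle, that $\mathcal L$ preserves the space of $\pi$-radial functions $f=F\circ\pi$ and acts there as a three-term difference operator $L$ on $\Z$ that is $2$-periodic away from the origin:
\[
(LF)(0)=F(0)-\tfrac12F(1)-\tfrac12F(-1),
\]
with diagonal entry $\tfrac34$ at every vertex whose reduced word ends in $b^{\pm1}$ (the order-$3$ edge contributes a same-level $-\tfrac14 F(n)$ term) and $1$ at $e$ and at vertices ending in $a$, the hopping weights being $\tfrac12$ towards the $a$-edge and $\tfrac14$ towards the $b$-triangle. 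Setting $\mu(n):=|\pi^{-1}(n)|$ (equal to $2^{\lceil n/2\rceil}$ for $n\ge0$ and $2^{\lfloor|n|/2\rfloor}$ for $n<0$) makes $L$ self-adjoint on $\ell^2(\Z,\mu)$, and $F\mapsto F\circ\pi$ is an isometry $\ell^2(\Z,\mu)\hookrightarrow\ell^2(G)$ intertwining $L$ and $\mathcal L$. This is precisely the strong regular covering situation, so the transfer principle recalled earlier gives $k_t(x)=K_t(\pi(x))$ with $K_t=e^{-tL}\delta_0$; the theorem is then the explicit heat kernel of $L$ started at the origin.

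Next I would solve the spectral problem for $L$. Symmetrising by the weight, $J:=\mu^{1/2}L\mu^{-1/2}$ is a Jacobi operator on $\ell^2(\Z)$ whose diagonal runs $(\dots,1,\tfrac34,1,\tfrac34,\dots)$ on each half-line and whose off-diagonal alternates $-\tfrac12,-\tfrac1{2\sqrt2}$; the sole deviation from periodicity is a one-site stacking defect at the origin (two adjacent diagonal $1$'s), and this defect is what will create the point spectrum. For the periodic bulk I would form the two-step monodromy matrix $M(z)$ of $(L-z)F=0$; its trace yields the dispersion relation, which I expect to reduce to
\[
4\left(z-\tfrac78\right)^2=\tfrac{25}{16}+\sqrt2\cos s,
\]
that is, two bands $z=\tfrac78\pm\tfrac{R_s}2$ with $R_s$ as in \eqref{R_s} and Bloch phase $s\in[0,\pi]$. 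Diagonalising $M(z)$ produces two Floquet solutions on each half-line; keeping on each side the one whose Floquet multiplier has modulus $<1$ gives the solutions $\phi_\pm(\,\cdot\,;z)$ that are $\ell^2$ near $\pm\infty$, and on a band their even- and odd-site values are exactly the combinations of $\sin(ms)$, $\sin((m+1)s)$ and $\cos(ms)$ appearing in the statement after writing $n=2m$ or $n=2m+1$.

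Then I would assemble the resolvent and extract the heat kernel à la Chung--Yau. The Green's function is $G_z(0,n)=\phi_-(\min(0,n);z)\,\phi_+(\max(0,n);z)/W(z)$, with the $\mu$-weight folded into the Wronskian $W$, and
\[
K_t(n)=\frac1{2\pi\imi}\oint e^{-tz}G_z(0,n)\,dz
\]
with the contour collapsed onto the spectrum. The zeros of $W$ lying in the spectral gaps are simple poles; I would check that they sit exactly at $z=\tfrac34$ (between the two bands) and $z=\tfrac74$ (just above the upper band), so their residues, divided by the symmetrising factor $\mu(n)^{1/2}$, furnish the discrete part $\alpha_n e^{-3t/4}+\beta_n e^{-7t/4}$. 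The jump of $G_z$ across each band, reparametrised by $s$ through $dz=\mp\tfrac{\sqrt2\sin s}{4R_s}\,ds$, gives the two integrals with densities $\gamma_n^\pm(s)$; here the denominator factor $1+8\sin^2 s$ should emerge as the squared norm of the Floquet solution (the spectral density) and the prefactor $\sqrt2^{-\lceil n/2\rceil}$ as $\mu(n)^{-1/2}$.

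The conceptual content is all in the two reductions above; the main obstacle is the explicit, sign-sensitive algebra they require. Solving the $2$-periodic recurrence through the origin defect, carrying separately the even ($n=2m$) and odd ($n=2m+1$) cases and the two signs of $n$ (which interchange the sublattice carrying the $\tfrac34$, hence the two displayed branches of $\alpha_n,\beta_n,\gamma_n^\pm$), computing $W(z)$ in closed form to pin its zeros at $\tfrac34$ and $\tfrac74$, and simplifying the branch-cut integrand to the stated trigonometric polynomials, is long though elementary. I would guard the bookkeeping with two independent checks: the normalisation $K_0(n)=\delta_{n0}$, forcing the four pieces of \eqref{eqK1} to sum to the identity at $t=0$, and the total-mass identity $\alpha_0+\beta_0+\int_0^\pi(\gamma_0^-+\gamma_0^+)\,ds=1$.
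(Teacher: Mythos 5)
Your overall architecture coincides with the paper's: project onto the weighted line via $\pi$, solve the one-dimensional spectral problem, and lift back with the factor $|\pi^{-1}(n)|^{-1/2}$. Your symmetrized Jacobi operator $J=\mu^{1/2}L\mu^{-1/2}$ is exactly the paper's projected Laplacian $\mathcal{L}^{pr}$ (diagonal $1,\tfrac34$ alternating with the stacking defect of two adjacent $1$'s at $n=-1,0$, off-diagonals $-\tfrac12$ and $-\tfrac1{2\sqrt2}$), and your dispersion relation $4(z-\tfrac78)^2=\tfrac{25}{16}+\sqrt2\cos s$ is the paper's equation for $\nu^2$. Where you genuinely diverge is in how the spectral problem on $\Z$ is resolved. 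The paper constructs explicit generalized eigenfunctions $f_{x,\mu,\epsilon}$ (using the reflection symmetry $m\mapsto -m-1$ to diagonalize by the parity $\epsilon$), computes their distributional scalar products to obtain the density $H_{\mu\epsilon}(x)=2\pi R_x(2R_x-\mu\epsilon(2+\sqrt2\cos x))$, and proves a completeness relation before writing the heat kernel as a spectral integral. You instead propose the resolvent route: build $G_z(0,n)$ from the Floquet solutions decaying at $\pm\infty$, and collapse $\frac1{2\pi\imi}\oint e^{-tz}G_z(0,n)\,dz$ onto the spectrum, reading off the point masses at $\tfrac34$ and $\tfrac74$ as residues at zeros of the Wronskian in the gaps and the band contributions from the jump of $G_z$. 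Both are standard and both work here; your version buys automatic completeness (it is built into the contour identity, so no analogue of the paper's Proposition on completeness is needed) and produces the density $1/H$ without guessing a normalization, since indeed $H_{+}(x)H_{-}(x)=\pi^2R_x^2(1+8\sin^2x)$ matches your predicted Wronskian factor; the paper's version buys more, namely the full unitary diagonalization (its spectral theorem), not just the kernel at the origin.

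Two bookkeeping cautions for when you carry out the algebra. First, your formula $\mu(n)=2^{\lfloor |n|/2\rfloor}$ for $n<0$ is the correct fiber size, and it does \emph{not} equal $2^{\lceil n/2\rceil}$ there; the discrepancy with the prefactor $\sqrt2^{-\lceil n/2\rceil}$ displayed in the theorem is absorbed into the values of the eigenfunctions at negative arguments through the symmetry $f(-m-1)=\epsilon f(m)$, so you must fix one convention and track it through the negative half-line. Second, the two half-lines are \emph{not} related by translation but by the reflection $n\mapsto -n-1$ (the sublattices carrying the diagonal $\tfrac34$ swap), so the Floquet solutions $\phi_\pm$ are reflections of one another rather than shifts; this is precisely what produces the different branches of $\gamma_n^{\pm}$ for $n\ge0$ versus $n<0$.
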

Note that expression \eqref{eqK1} simplifies greatly for $n=0$.
Using the limiting values $s=0$ and $s=\pi$ in the formula for $R_s$, we can describe the spectrum of the Laplacian on $\Gamma$. 
\begin{corollary}\label{Cor_spec}
 The spectrum of the Laplacian $\mathcal{L}$ on $\ell^2(G)$ is the following closed subset of $\R$, see Fig.~\ref{fig:specL}:
$$
\operatorname{Sp}(\mathcal{L})=\Big[\lambda_0,\frac74-\lambda_1\Big]\sqcup\Big\{\frac{3}{4}\Big\}\sqcup\Big[\lambda_1,\frac74-\lambda_0\Big]\sqcup\Big\{\frac{7}{4}\Big\} 
$$
where
$$
\lambda_0=\frac78-\frac12\sqrt{\frac{25}{16}+\sqrt{2}}=0.01234\dots,\quad \lambda_1=\frac78+\frac12\sqrt{\frac{25}{16}-\sqrt{2}}=1.0675\dots.
$$
\end{corollary}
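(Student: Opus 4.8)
The plan is to read the spectrum directly off the heat-kernel formula of Theorem~\ref{K1}, exploiting the fact that for a bounded self-adjoint operator the heat semigroup is a Laplace transform of its spectral measure. First I would reduce to a single scalar measure. Since $\mathcal{L}$ commutes with the left regular representation, so do its spectral projections $E(\cdot)$, whence all diagonal measures $\mu_{x,x}(B)=\langle E(B)\delta_x,\delta_x\rangle$ coincide with $\mu_{e,e}$. As $\{\delta_x\}_{x\in G}$ is total in $\ell^2(G)$, the standard support criterion gives $\operatorname{Sp}(\mathcal{L})=\overline{\bigcup_x\operatorname{supp}\mu_{x,x}}=\operatorname{supp}\mu_{e,e}$. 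It therefore suffices to determine the support of the single finite measure $\mu_{e,e}$, which is characterised by $\int_{\R}e^{-t\lambda}\,d\mu_{e,e}(\lambda)=K_t(0)$ for all $t>0$.

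Next I would specialise Theorem~\ref{K1} to $n=0$. Setting $m=0$ in the even-case formulas yields $\alpha_0=\beta_0=\tfrac16$ and the common density $\gamma_0^{\pm}(s)=\dfrac{4\sin^2 s}{\pi(1+8\sin^2 s)}$, which is nonnegative and strictly positive on $(0,\pi)$, so that
\[
K_t(0)=\tfrac16 e^{-3t/4}+\tfrac16 e^{-7t/4}+\int_0^\pi e^{-t\lambda^-(s)}\gamma_0^-(s)\,ds+\int_0^\pi e^{-t\lambda^+(s)}\gamma_0^+(s)\,ds,\qquad \lambda^\pm(s)=\tfrac78\pm\tfrac12 R_s.
\]
In each integral I would change variables to $\lambda=\lambda^\pm(s)$. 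Because $s\mapsto R_s$ is a continuous strictly decreasing bijection of $[0,\pi]$, each $\lambda^\pm$ is a monotone homeomorphism onto its image, pushing the positive density forward to an absolutely continuous measure supported on the \emph{full} closed image interval. By injectivity of the Laplace transform on compactly supported finite measures (the spectrum is compact since $\mathcal{L}$ is bounded), this explicit atoms-plus-absolutely-continuous measure must equal $\mu_{e,e}$.

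It then remains to compute the two images using the limiting values $R_0=\sqrt{\tfrac{25}{16}+\sqrt2}$ and $R_\pi=\sqrt{\tfrac{25}{16}-\sqrt2}$. Monotonicity gives $\lambda^-([0,\pi])=[\lambda^-(0),\lambda^-(\pi)]=[\lambda_0,\tfrac74-\lambda_1]$ and $\lambda^+([0,\pi])=[\lambda^+(\pi),\lambda^+(0)]=[\lambda_1,\tfrac74-\lambda_0]$, using the algebraic identities $\tfrac74-\lambda_1=\tfrac78-\tfrac12 R_\pi$ and $\tfrac74-\lambda_0=\tfrac78+\tfrac12 R_0$. Combined with the two atoms at $\tfrac34$ and $\tfrac74$ this produces
\[
\operatorname{supp}\mu_{e,e}=\Big[\lambda_0,\tfrac74-\lambda_1\Big]\cup\Big\{\tfrac34\Big\}\cup\Big[\lambda_1,\tfrac74-\lambda_0\Big]\cup\Big\{\tfrac74\Big\}.
\]
Finally I would verify the numerical orderings $\tfrac74-\lambda_1<\tfrac34<\lambda_1$ and $\tfrac74-\lambda_0<\tfrac74$ to confirm that the four pieces are pairwise disjoint, upgrading the union to the claimed disjoint union $\sqcup$.

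I expect the only genuinely delicate point to be the identification of $\operatorname{supp}\mu_{e,e}$ with the full claimed set. The inclusion $\operatorname{Sp}(\mathcal{L})\subseteq(\cdots)$ is immediate from the exponents present in $K_t(0)$, but the reverse inclusion needs both the strict positivity of $\gamma_0^{\pm}$ on $(0,\pi)$ (so no subinterval of a band is dropped from the support) and the non-vanishing of $\alpha_0,\beta_0$ (so that $\tfrac34,\tfrac74$ are genuine spectral points, indeed eigenvalues as atoms of a diagonal spectral measure). The Laplace-transform uniqueness that legitimises reading the measure off the formula is routine but is precisely what makes the argument rigorous rather than merely formal.
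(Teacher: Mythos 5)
Your proposal is correct and follows essentially the same route as the paper: the paper's justification (Remark after Theorem~\ref{K1}) is precisely that formula~\eqref{eqK1} is a spectral decomposition whose support, read off by letting $s$ range over $[0,\pi]$ and using $R_0=\sqrt{\tfrac{25}{16}+\sqrt2}$, $R_\pi=\sqrt{\tfrac{25}{16}-\sqrt2}$, gives $\operatorname{Sp}(\mathcal{L})=\operatorname{Sp}(\mathcal{L}^{pr})$. Your reduction to the single diagonal measure $\mu_{e,e}$ via left-invariance, the Laplace-transform uniqueness argument, and the check that $\gamma_0^{\pm}(s)=\tfrac{4\sin^2 s}{\pi(1+8\sin^2 s)}>0$ on $(0,\pi)$ supply exactly the details the paper leaves implicit, and all of these steps are sound.
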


Note that the isolated points are indeed eigenvalues. Proposition~\ref{spectrum_L^pr} describes the eigenfunctions of the projected Laplacian, and Proposition~\ref{efct_cov} shows how to lift them to eigenfunctions of the Laplacian on $\ell^2(G)$.

Corollary~\ref{Cor_spec} should be compared with Theorem 1 from Cartwright--Soardi in \cite{MR820357}, where they treat the general free product of two cyclic groups. However, note that our Laplacian is somewhat different, as we consider double edges for the generator $a=a^{-1}$. The nature of the methods is very different; unlike their work, our technique is through spectral problem resolution. Magee pointed out to us that the use of the $R$-transform and free probability provides another approach to determining the spectrum.

{\bf Outline.} 
Section~\ref{sec2} contains basics of weighted graphs: their definition and coverings.
In Section~\ref{sec3}, we discuss coverings of weighted graphs and provide geometric insight into this notion by defining quotient weighted graphs on the basis of groups of automorphisms of graphs. Then, we prove Proposition~\ref{Prop2}, the main contribution of this section, that reduces the verification of a map to be a covering to a group theoretical problem. 
In Section~4, we describe Chung--Yau's result on the relation between heat kernels of two weighted graphs if one covers another strongly and regularly.
Finally, in Section~5, we solve the spectral problem for the projected Laplacian and use the resulting spectral decomposition to prove the explicit heat kernel formula, completing the proof of Theorem~\ref{K1}.

\section{Basics of weighted graphs}\label{sec2}
We start by defining the basics of weighted graphs.
\begin{definition}
 A \textit{weighted graph} is a set of \textit{vertices} $V$ provided with a non-negative symmetric \textit{weight function} 
 $$
 w\colon V\times V\to \R_{\geq 0},\quad w(u,v)=w(v,u)\quad \forall u,v \in V.
 $$
 By abuse of notation, sometimes we will not distinguish between a weighted graph $(V,w)$ and its underlying set of vertices $V$ when the weight function $w$ is clear from the context, so that we simply write $V$ instead of $(V,w)$.

 In a weighted graph, an \textit{edge} is a pair of vertices with strictly positive weight, and two vertices connected by an edge are called \textit{adjacent}.
 
 The \textit{degree} of a vertex $u\in V$ is defined as 
 $$
 d_u=\sum_{v\in V}w(u,v).
 $$
 
 We say that a graph $V$ is \textit{$k$-regular} if $d_u=k$, $\forall u\in V$.
\end{definition}

\begin{remark}\label{top_graph}
A topological graph is a weighted graph, where the weight $w(x,y)$ is defined as the number of edges between $x$ and $y$, where, in the case $x=y$, each edge is counted twice. 
\end{remark}

\begin{remark}
 Any metric space is a weighted graph, where the weight function is the distance, but the degrees can be infinite. In this work, all degrees are assumed to be finite.
\end{remark}

\begin{definition}
Given two weighted graphs $(\tilde{V},\tilde{w})$, $(V,w)$, a map $\pi \colon \tilde{V} \to V$ is called \textit{covering of weighted graphs} if $\pi(\tilde{V})=V$ and if there exists a function $\lambda: V \to \R_{>0}$ such that for any $u\in V$ and any $y\in \tilde{V}$, the following equation holds
 \begin{equation}\label{new_def}
\sum_{x\in \pi^{-1}(u)}\tilde{w}(x,y)=\lambda(\pi(y)) w(u,\pi(y)).
\end{equation}
We say that $(\tilde{V},\tilde{w})$ \textit{covers} $(V,w)$ through $\pi$. 
\end{definition} 

\begin{remark}
 This definition is a modification of the one given by Chung--Yau~\cite{MR1667452}, made to allow infinite fibers. 
\end{remark}
 
\begin{remark}
 In this work, unless otherwise specified, we assume that all graphs are connected, which means that for all $u,v\in V$, there exists a sequence of edges connecting $u$ and $v$. Consequently, this means that for all $u\in V$, $d_u\neq0$.
\end{remark}

\begin{remark}\label{new_def_finite_fibers}
For a covering of (connected) weighted graphs $\pi \colon \tilde{V} \to V$, if $\pi^{-1}(u)$ is a finite set for any $u\in V$, then $\lambda(u)=\frac{c}{|\pi^{-1}(u)|}$ for some constant $c>0$.

 Indeed, let $u,v \in V$ be such that $w(u,v)\ne0$ and $y\in\pi^{-1}(v)$. Since $\pi$ is a covering, we have 
 $$
 \sum_{x\in \pi^{-1}(u)}\tilde{w}(x,y)=\lambda(v) w(u,v).
 $$
  Summing over $y\in \pi^{-1}(v)$ both sides of this equality, we obtain
 $$
 \sum_{y\in \pi^{-1}(v)} \sum_{x\in \pi^{-1}(u)}\tilde{w}(x,y) =\vert \pi^{-1}(v)\vert \lambda(v)w(u,v).
 $$
 Using the fact that the left hand side is symmetric in $u$ and $v$, and the condition  $w(u,v)\ne0$, we conclude that 
$$
\vert \pi^{-1}(u)\vert \lambda(u)=\vert \pi^{-1}(v)\vert \lambda(v),
$$
concluding that the quantity $\vert \pi^{-1}(u)\vert \lambda(u)$ is constant for all $u\in V$. 
\end{remark}

\section{Coverings of weighted graphs}\label{sec3}
In this section, we rework some of the results from~\cite{MR1667452} and present a general group theoretical approach to quotient graphs.
\subsection{Properties}

Any topological covering is a covering of weighted graphs with $\lambda=1$. However, a covering of weighted graphs is not necessarily a topological covering since the preimages (fibers) of vertices can have different cardinalities.

We also define a more specific class of coverings, which will be useful for us.

\begin{definition}
Given two weighted graphs $(\tilde{V},\tilde{w})$ and $(V,w)$, we say $(\tilde{V},\tilde{w})$ \textit{covers} $(V,w)$ \textit{strongly} and \textit{regularly} if there exists a vertex $u_0\in V$, called \textit{distinguished vertex}, such that, for any vertex $x\in \tilde{V}$ there exists a covering of weighted graphs $\pi_x \colon (\tilde{V},\tilde{w})\to (V,w)$ such that $\pi_x^{-1}(u_0)=\{x\}$.
\end{definition}

\begin{remark}
In the case of topological graphs, $\tilde{\Gamma}$ covers $\Gamma$ strongly and regularly in the topological sense if and only if they are isomorphic. 
\end{remark}

We note that the degrees of the vertices of a graph $V$ and a covering graph $\tilde V$ are not necessarily equal. The following lemma shows how they are related.

\begin{lemma}
 Let $(\tilde{V},\tilde{w})$ and $(V,w)$ be two weighted graphs such that $(\tilde{V},\tilde{w})$ covers $(V,w)$ through $\pi$.
 Then, the following relation between degrees in $\tilde{V}$ and $V$ holds
 \begin{equation}\label{new_degree}
 \tilde{d}_x=\lambda(\pi(x))d_{\pi(x)}, \quad\forall x\in \tilde{V}.
 \end{equation}
 In particular, $\tilde{d}_x=\tilde{d}_y$ if $\pi(x)=\pi(y)$.
\end{lemma}

\begin{proof}
Let $x\in\tilde{V}$. Using the definition of degree of a vertex and equation~\eqref{new_def} we have 
$$
\tilde{d}_x  =\sum_{y\in\tilde{V}}\tilde{w}(x,y) = \sum_{u\in V}\sum_{y\in\pi^{-1}(u)}\tilde{w}(x,y) = \sum_{u\in V}\lambda(\pi(x))w(\pi(x),u) = \lambda(\pi(x))d_{\pi(x)}.
$$
\end{proof}

\subsection{Quotient weighted graphs}
Here we establish a result which allows to reduce the verification of the property of a covering with finite fibers to a group theoretical problem, which can often facilitate verifications by using a geometrical argument.

Given a weighted graph $(V,w)$, we let $\operatorname{Aut}(V,w)$ denote the group of automorphisms of the weighted graph, that is, the set of permutations $\sigma$ of $V$ which preserve the weight function
$$
w(\sigma(u),\sigma(v))=w(u,v) \quad \forall u,v\in V.
$$

\begin{definition}
 Let $(\tilde{V},\tilde{w})$ be a weighted graph and $G\subset \operatorname{Aut}(\tilde{V},\tilde{w})$ a subgroup such that the orbit $Gx$ of $x$ is a finite set for any $x\in\tilde{V}$. The \textit{quotient weighted graph} of the weighted graph $(\tilde{V},\tilde{w})$ with respect to the group $G$ is a weighted graph $(V,w)$  defined as
 
\begin{equation} \label{qgraph}
 V=\tilde{V}/G, \quad w(u,v)=\sum_{\substack{x\in u\\ y\in v}}\tilde{w}(x,y) \quad \forall u,v\in V.
\end{equation}
\end{definition}
 
 Before stating our result, we recall the following group theoretical fact, which will be needed in the proof.
\begin{lemma}[Orbit-stabilizer theorem]
Let a group $G$ act on a set $X$. Then, for any $x\in X$, the orbit $Gx=\{gx\mid g\in G\}$ is in bijection with the set of cosets for the stabilizer subgroup $G/H_x:=\{gH_x\mid g\in G\}$, where $H_x:=\{h\in G\mid hx=x\}\subset G$ is the stabilizer subgroup of $x$.
\end{lemma}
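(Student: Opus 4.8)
The plan is to construct an explicit bijection between the orbit $Gx$ and the coset space $G/H_x$ and verify its defining properties one at a time. The natural candidate is the map
\[
\phi\colon G/H_x \to Gx, \qquad \phi(gH_x) = gx.
\]
First I would check that $\phi$ is well-defined, i.e.\ independent of the chosen coset representative: if $gH_x = g'H_x$, then $g^{-1}g' \in H_x$, so $(g^{-1}g')x = x$, and acting by $g$ gives $g'x = gx$. This is the only step that genuinely uses the stabilizer condition, so it is where the heart of the argument lies.

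Next I would establish injectivity and surjectivity. For surjectivity, every element of $Gx$ is by definition of the form $gx$ for some $g \in G$, and $gx = \phi(gH_x)$, so $\phi$ hits everything in $Gx$. For injectivity, suppose $\phi(gH_x) = \phi(g'H_x)$, that is $gx = g'x$; then $g^{-1}g'x = x$, so $g^{-1}g' \in H_x$, whence $gH_x = g'H_x$. Notice that the well-definedness computation and the injectivity computation are the two directions of the single equivalence
\[
gx = g'x \iff g^{-1}g' \in H_x \iff gH_x = g'H_x,
\]
so in practice I would isolate this equivalence once and read off both properties from it.

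Since $\phi$ is a well-defined surjection that is also injective, it is a bijection, which is exactly the claimed correspondence. There is no real obstacle here — this is the standard orbit–stabilizer argument — and the only point requiring any care is keeping the left/right conventions consistent (the paper uses left cosets $gH_x$ and a left action $gx$, which match). I would keep the proof short and self-contained, as it is invoked purely as a counting tool for the finite fibers $\pi^{-1}(u_0) = Gx$ appearing in the quotient-graph constructions.
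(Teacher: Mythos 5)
Your proof is correct and is the standard orbit--stabilizer argument via the bijection $gH_x\mapsto gx$; the paper itself states this lemma without proof, merely recalling it as a known group-theoretic fact, so there is nothing to compare against and nothing to add.
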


\begin{proposition}
\label{Prop2}
 Let $(\tilde{V},\tilde{w})$ be a weighted graph, $G\subset \operatorname{Aut}(\tilde{V},\tilde{w})$ a subgroup such that the orbit $Gx$ of $x$ is a finite set for any $x\in\tilde{V}$ and $(V,w)$ the \textit{quotient weighted graph} of the weighted graph $(\tilde{V},\tilde{w})$ with respect to the group $G$. Then, the canonical projection map to the quotient space $\pi\colon\tilde{V}\to V$ is a covering of weighted graphs.
\end{proposition}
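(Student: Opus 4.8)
The plan is to verify that the canonical projection $\pi\colon \tilde V \to V = \tilde V/G$ satisfies the two requirements in the definition of a covering of weighted graphs: that it is a morphism, and that it satisfies the defining identity \eqref{equiv-covering}. I would organize the argument so that the morphism part is dispatched immediately from the definitions, leaving the covering identity as the only real content.

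First, I would check that $\pi$ is a morphism. Its fibers $\pi^{-1}(u)$ are precisely the $G$-orbits, which are finite by hypothesis, so $\pi$ has finite fibers. The weight-compatibility condition for a morphism, namely $w(a,b) = \sum_{x \in \pi^{-1}(a),\, y \in \pi^{-1}(b)} \tilde w(x,y)$, is literally the definition \eqref{qgraph} of the quotient weight $w$, so there is nothing further to prove here.

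The substance is the covering identity \eqref{equiv-covering}. Fix $u \in V$ and two vertices $x, y \in \tilde V$ with $\pi(x) = \pi(y)$; by definition of the quotient this means $x$ and $y$ lie in the same orbit, so $y = gx$ for some $g \in G$. The key observation is twofold: each $g \in G \subset \operatorname{Aut}(\tilde V, \tilde w)$ preserves the weight, $\tilde w(gz, gz') = \tilde w(z,z')$ (this is the morphism condition specialized to a bijection), and $g$ maps the fiber $\pi^{-1}(u)$ bijectively onto itself, since each orbit is $G$-invariant ($g\cdot Gz_0 = Gz_0$). Hence, writing $\tilde w(z', y) = \tilde w(z', gx) = \tilde w(g^{-1}z', x)$ and reindexing by $z = g^{-1}z'$, which runs over $\pi^{-1}(u)$ again, I obtain
\[
\sum_{z' \in \pi^{-1}(u)} \tilde w(z', y) = \sum_{z' \in \pi^{-1}(u)} \tilde w(g^{-1}z', x) = \sum_{z \in \pi^{-1}(u)} \tilde w(z, x),
\]
which is exactly \eqref{equiv-covering}. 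One could instead verify the equivalent form \eqref{eq1} of Proposition~\ref{Prop1}: the same computation shows the inner sum $\sum_{z \in \pi^{-1}(u)} \tilde w(z,x')$ is constant as $x'$ ranges over the orbit $\pi^{-1}(\pi(x))$, so summing \eqref{qgraph} over that orbit factors as $w(u,\pi(x)) = |\pi^{-1}(\pi(x))|\sum_{z \in \pi^{-1}(u)} \tilde w(z,x)$, where the orbit size $|\pi^{-1}(\pi(x))| = [G:H_x]$ is supplied by the orbit-stabilizer theorem; rearranging yields \eqref{eq1}.

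I expect the only genuinely delicate point to be the change of variables in the fiber sum: one must be careful that $g$ (equivalently $g^{-1}$) restricts to a \emph{bijection} of $\pi^{-1}(u)$, which relies precisely on each orbit being invariant under the whole group, not merely on $g$ being a global bijection of $\tilde V$. Everything else is routine bookkeeping with the definitions.
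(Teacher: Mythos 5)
Your proposal is correct and rests on the same key mechanism as the paper's proof: reindexing the fiber sum using that each $g\in G$ preserves $\tilde w$ and maps the orbit $\pi^{-1}(u)$ bijectively onto itself. The only difference is one of packaging --- you verify the defining identity \eqref{equiv-covering} directly with a single element $g$ satisfying $y=gx$ (and explicitly check the morphism condition, which the paper leaves implicit), whereas the paper verifies the equivalent form \eqref{eq1} by summing over the whole orbit via coset representatives and the orbit--stabilizer theorem, a route you also sketch as an alternative.
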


\begin{proof}
 Let $x\in\tilde{V}$, $H_x:=\{h\in G\mid hx=x\}\subset G$ the \textit{stabilizer} subgroup of $x$, $v=\pi(x)$, $u\in V$ and $s\colon G/H_x\to G$ a map such that $s(\alpha)H_x=\alpha$, $\forall\alpha\in G/H_x$ (this means that a representative $s(\alpha)$ is chosen in each coset $\alpha=gH_x$).
 Then,
\begin{align*}
    w(u,v) & =\sum_{\substack{z\in \pi^{-1}(u)\\ x'\in \pi^{-1}(v)}}\tilde{w}(z,x') & \text{by def. of $w$}  \\
    & =\sum_{\substack{z\in \pi^{-1}(u) \\ \alpha\in G/H_x}}\tilde{w}(z,s(\alpha)x) & \text{using the bijection $G/H_x\to Gx=\pi^{-1}(v)$} \\
    & = \sum_{\substack{z\in \pi^{-1}(u) \\ \alpha\in G/H_x}} \tilde{w}({s(\alpha)}^{-1}z,x) & \text{since $s(\alpha)\in \operatorname{Aut}(\tilde{V},\tilde{w})$ } \\
    & =\sum_{\alpha\in G/H_x}\sum_{z'\in \pi^{-1}(u)}\tilde{w}(z',x) & \text{substituting $s(\alpha)^{-1}z$ by $z'$} \\ 
    & =\vert G/H_x\vert \sum_{z'\in \pi^{-1}(u)}\tilde{w}(z',x) & \text{since the internal sum is independent of $\alpha$} \\
    & =\vert \pi^{-1}(v)\vert\sum_{z\in \pi^{-1}(u)}\tilde{w}(z,x) & \text{using $\vert G/H_x\vert=\vert \pi^{-1}(u) \vert$ and substituting $z'$ by $z$}\\ 
\end{align*}
 which is exactly formula~\eqref{new_def} with $\lambda(v)=\frac{1}{|\pi^{-1}(v)|}$.
\end{proof}

\section{Spectrum of the Laplacian and coverings}
\subsection{The Laplacian matrix}
In this section, we describe a result from~\cite{MR1667452} that relates the heat kernels of a graph and a covering graph.

The \emph{matrix coefficients} $A(u,v)$ of a linear map (operator) $A\colon \C^V \to \C^V$ are defined by 
$$
(Af)(u)=\sum_{v\in V}A(u,v)f(v).
$$

\begin{definition} 
 Given a weighted graph $(V,w)$ the \textit{normalized Laplacian} $\mathcal{L}$ of $(V,w)$ is an operator with the matrix coefficients  
\begin{equation}\label{normLaplacian}
  \mathcal{L}(u,v)=\delta_{u,v}-\frac{w(u,v)}{\sqrt{d_ud_v}}.
\end{equation}
 \end{definition}
 
This is a self-adjoint (bounded) operator on the Hilbert space $\ell^2(V)$.

The following lemma describes the relation between the Laplacians on a graph and on a covering graph.
\begin{lemma}\label{norm_lap_cov}
 Let $(\tilde{V},\tilde{w})$ cover $(V,w)$ through $\pi$ with associated function $\lambda: V \to \R_{>0}$. Let $\tilde{\mathcal{L}}$ and $\mathcal{L}$ denote the corresponding (normalized) Laplacians.
 Then, for any $u\in V$ and any $y\in \tilde{V}$, one has the equality
 \begin{equation}\label{norm_lap_cov_rel}
\sum_{x\in\pi^{-1}(u)}\tilde{\mathcal{L}}(x,y)=\sqrt{\frac{\lambda(\pi(y))}{\lambda(u)}} \mathcal{L} (u,\pi(y)).
\end{equation}
\end{lemma}

\begin{proof}
It follows from equations~\eqref{new_def} and~\eqref{new_degree}.
Indeed, let $u\in V$ and $y\in \tilde{V}$. We have
\begin{align*}
 \sum_{x\in\pi^{-1}(u)} \tilde{\mathcal{L}}(x,y) & = \sum_{x\in\pi^{-1}(u)} \Big( \delta_{x,y} - \frac{\tilde w(x,y)}{\sqrt{\tilde d_x\tilde d_y}} \Big) & \text{by def. of $\tilde{\mathcal{L}}$}\\
 & = \delta_{\pi(y),u} - \frac{1}{\sqrt{\lambda(u)d_u\lambda(\pi(y))d_{\pi(y)}}}\sum_{x\in\pi^{-1}(u)}\tilde w(x,y) & \text{by \eqref{new_degree}}\\
 & = \delta_{\pi(y),u} - \frac{1}{\sqrt{\lambda(u)d_u\lambda(\pi(y))d_{\pi(y)}}}\lambda(\pi(y))w(u,\pi(y)) & \text{by \eqref{new_def}}\\
 & = \delta_{\pi(y),u} - \sqrt{\frac{\lambda(\pi(y))}{\lambda(u)}}\frac{w(u,\pi(y))}{\sqrt{d_ud_{\pi(y)}}} \\
 & = \sqrt{\frac{\lambda(\pi(y))}{\lambda(u)}}\mathcal{L}(u,\pi(y)).
\end{align*}
\end{proof}

The next proposition describes how eigenfunctions of the normalized Laplacian lift through a covering.
\begin{proposition}\label{efct_cov}
 Let $(\tilde{V},\tilde{w})$ cover $(V,w)$ through $\pi$ with associated function $\lambda: V \to \R_{>0}$, and assume that all fibers $\pi^{-1}(v)$ are finite. Let $\tilde{\mathcal{L}}$ and $\mathcal{L}$ denote the corresponding (normalized) Laplacians. If $f\in \ell^2(V)$ is an eigenfunction of $\mathcal{L}$ with  eigenvalue $\mu$, then the pull-back $\pi^{*}\sqrt{\lambda}f \in \ell^2(\tilde{V})$ is an eigenfunction of $\tilde{\mathcal{L}}$ with the same eigenvalue $\mu$. Explicitly, 
 $$
 (\pi^{*}\sqrt{\lambda}f)(x)=\sqrt{\lambda(\pi(x))}f(\pi(x)),
 $$
 and 
 the function $\lambda$ is given by
$$
\lambda(v)=\frac{c}{|\pi^{-1}(v)|}
$$
 for a constant $c>0$, as described in Remark~\ref{new_def_finite_fibers}.
\end{proposition}

\begin{proof}
 Suppose $f\in \ell^2(V)$ is an eigenfunction of $\mathcal{L}$ with eigenvalue $\mu$, that is $\mathcal{L}f=\mu f$. Let $x\in \tilde{V}$. We have
 \begin{align*}
 (\tilde{\mathcal{L}}\pi^{*}\sqrt{\lambda}f)(x) & = \sum_{y\in\tilde{V}} \tilde{\mathcal{L}}(x,y) \sqrt{\lambda(\pi(y))}f(\pi(y)) 
  = \sum_{v\in V} \sum_{y\in\pi^{-1}(v)} \tilde{\mathcal{L}}(x,y) \sqrt{\lambda(v)}f(v) \\
 & = \sum_{v\in V} \sqrt{\lambda(v)}f(v) \sum_{y\in\pi^{-1}(v)} \tilde{\mathcal{L}}(x,y)  & \text{}\\
 & = \sum_{v\in V} \sqrt{\lambda(v)}f(v) \sqrt{\frac{\lambda(\pi(x))}{\lambda(v)}} \mathcal{L} (\pi(x),v)  & \text{by \eqref{norm_lap_cov_rel}}\\
 & = \sqrt{\lambda(\pi(x))} \sum_{v\in V}  \mathcal{L} (\pi(x),v) f(v) =  \sqrt{\lambda(\pi(x))} (\mathcal{L}f)(\pi(x)) \\
 & = \sqrt{\lambda(\pi(x))} \mu f(\pi(x)) = \mu (\pi^{*}\sqrt{\lambda}f)(x) & \text{since $\mathcal{L}f=\mu f$}.
\end{align*}
Let us show that $\pi^{*}\sqrt{\lambda}f \in \ell^2(\tilde{V})$. We have
 \begin{align*}
 \| \pi^{*}\sqrt{\lambda}f \|^2 & = \sum_{x\in \tilde{V}} |(\pi^{*}\sqrt{\lambda}f)(x)|^2 
 = \sum_{x\in \tilde{V}} \lambda(\pi(x)) |f(\pi(x))|^2 \\
 & = \sum_{v\in V}\sum_{x\in \pi^{-1}(v)} \lambda(v) |f(v)|^2 = \sum_{v\in V} |\pi^{-1}(v)| \lambda(v) |f(v)|^2 \\
 & = c \sum_{v\in V} |f(v)|^2 = c \|f\|^2 < \infty & \text{since $\lambda(v)=\frac{c}{|\pi^{-1}(v)|}.$}\\
\end{align*}
This completes the proof.
\end{proof}

\subsection{The heat kernel}
We start by defining the heat operator of a graph. 

\begin{definition}\label{ht}
 Given a weighted graph $(V,w)$, the \textit{heat operator} $h_t$ of $(V,w)$ is an operator defined for $t\geq 0$ as 
 $$
 h_t=e^{-t\mathcal{L}}=\sum_{k=0}^{\infty}\frac{(-t)^k}{k!}\mathcal{L}^k.
 $$
\end{definition}
The basic problem is to determine the heat kernel $h_t(x,y)$.
\begin{remark}
For a Cayley graph $\Gamma(G)$ of a group $G$, the invariance of $\Gamma(G)$ under left translations implies that $h_t(x,y)=k_t(y^{-1}x)$, where $k_t(x):=h_t(x,e)$. In this case, the problem of determining the matrix coefficients of the heat operator is reduced to the problem of determining the function $k_t(x)$.

 The function $k_t\colon G\to\C$ 
is the unique solution of the following differential equation with initial condition
\begin{equation}\label{hk}
 \begin{cases} 
      \frac{\partial}{\partial{t}}k_t=-\mathcal{L}k_t \\
      k_0(x)=\delta_{e,x}.
\end{cases}
\end{equation}
\end{remark}

\begin{lemma}\label{norm_lapk_cov}
 Let $(\tilde{V},\tilde{w})$ cover $(V,w)$ through $\pi$ with associated function $\lambda: V \to \R_{>0}$. Let $\tilde{\mathcal{L}}$ and $\mathcal{L}$ denote the corresponding (normalized) Laplacians.
 Then, for any $k\in\Z_{\ge0}$, any $u,v\in V$ and any $y\in\pi^{-1}(v)$, one has the equality
 \begin{equation}\label{norm_lapk_cov_rel}
\sum_{x\in\pi^{-1}(u)}\tilde{\mathcal{L}}^k(x,y)=\sqrt{\frac{\lambda(v)}{\lambda(u)}}\mathcal{L}^k(u,v).
\end{equation}
\end{lemma}

\begin{proof}
By taking into account the fact that $\tilde{\mathcal{L}}^0$ and $\mathcal{L}^0$ are the identity operators, equality~\eqref{norm_lapk_cov_rel} with $k=0$ is verified as follows
$$
 \sum_{x\in\pi^{-1}(u)} \tilde{\mathcal{L}}^0(x,y) = \sum_{x\in\pi^{-1}(u)}\delta_{x,y}=\delta_{u,\pi(y)}=\delta_{u,v} =\sqrt{\frac{\lambda(v)}{\lambda(u)}}\delta_{u,v}=\sqrt{\frac{\lambda(v)}{\lambda(u)}} \mathcal{L}^0(u,v).
$$

The case $k=1$ is precisely Lemma~\ref{norm_lap_cov}.

Now, we proceed by induction. Assume that equality~\eqref{norm_lapk_cov_rel} is satisfied for all $k\in\{0,1,\dots,r\}$ for $r\ge1$. Then, we have
\begin{align*}
 \sum_{x\in\pi^{-1}(u)} \tilde{\mathcal{L}}^{r+1}(x,y) & = \sum_{x\in\pi^{-1}(u)}\sum_{z\in\tilde V} \tilde{\mathcal{L}}^{r}(x,z) \tilde{\mathcal{L}}(z,y) 
  = \sum_{z\in\tilde V}\Big(\sum_{x\in\pi^{-1}(u)} \tilde{\mathcal{L}}^{r}(x,z)\Big) \tilde{\mathcal{L}}(z,y)\\
 & = \sum_{z\in\tilde V}\sqrt{\frac{\lambda(\pi(z))}{\lambda(u)}} \mathcal{L}^r(u,\pi(z)) \tilde{\mathcal{L}}(z,y) \\
 & = \sum_{a\in V}\sqrt{\frac{\lambda(a)}{\lambda(u)}} \mathcal{L}^r(u,a)\sum_{z\in\pi^{-1}(a)} \tilde{\mathcal{L}}(z,y) \\
 & = \sum_{a\in V}\sqrt{\frac{\lambda(a)}{\lambda(u)}}\mathcal{L}^r(u,a)\sqrt{\frac{\lambda(v)}{\lambda(a)}}\mathcal{L}(a,v)\\
 & = \sqrt{\frac{\lambda(v)}{\lambda(u)}}\sum_{a\in V}\mathcal{L}^r(u,a)\mathcal{L}(a,v)
  = \sqrt{\frac{\lambda(v)}{\lambda(u)}}\mathcal{L}^{r+1}(u,v)
\end{align*}
where, 
in the third equality, we used the induction hypothesis for $k=r$,
and in the fifth equality, we used formula~\eqref{norm_lapk_cov_rel} for $k=1$.
\end{proof}

\begin{proposition} \label{rel_covering_heat_kernel}
Let $(\tilde{V},\tilde{w})$ cover $(V,w)$ through $\pi$ with associated function $\lambda: V \to \R_{>0}$. Let $\tilde{h}_t$ and $h_t$ denote the corresponding heat operators.
 Then, for any $u,v\in V$ and any $y\in\pi^{-1}(v)$, one has the equality
\begin{equation}\label{eq5}
\sum_{x\in\pi^{-1}(u)}\tilde{h}_t(x,y)=\sqrt{\frac{\lambda(v)}{\lambda(u)}}h_t(u,v).
\end{equation}
In particular, suppose that the covering is strong and regular. Choose a distinguished vertex $u_0\in V$ and a covering 
$\pi_x\colon\tilde{V}\to V$ such that $\pi_x^{-1}(u_0)=\{x\}$. Then, we have, for any $v\in V$ and $y\in\pi_x^{-1}(v)$,
 \begin{equation}\label{new_hk}
 \tilde{h}_t(x,y)=\sqrt{\frac{\lambda(v)}{\lambda(u_0)}}h_t(u_0,v)=\sqrt{\frac{\lambda(\pi_x(y))}{\lambda(\pi_x(x))}}h_t(\pi_x(x),\pi_x(y)),
 \end{equation}
 since $\pi_x(x)=u_0$.
\end{proposition}

\begin{proof}
Let $u,v\in V$ and $y\in\pi^{-1}(v)$. Then, using $h_t=\sum_{k=0}^{\infty}\frac{(-t)^k}{k!}\mathcal{L}^k$, we have 
\begin{align*}
 & \sum_{x\in\pi^{-1}(u)}\tilde{h}_t(x,y) 
 = \sum_{x\in\pi^{-1}(u)}\sum_{k=0}^{\infty}\frac{(-t)^k}{k!}\tilde{\mathcal{L}}^k(x,y)
 = \sum_{k=0}^{\infty}\frac{(-t)^k}{k!}\sum_{x\in\pi^{-1}(u)}\tilde{\mathcal{L}}^k(x,y) \\
 & \quad \quad \quad = \sum_{k=0}^{\infty}\frac{(-t)^k}{k!}\sqrt{\frac{\lambda(v)}{\lambda(u)}}\mathcal{L}^k(u,v) \, = \, \sqrt{\frac{\lambda(v)}{\lambda(u)}}\sum_{k=0}^{\infty}\frac{(-t)^k}{k!}\mathcal{L}^k(u,v) = \, \sqrt{\frac{\lambda(v)}{\lambda(u)}}h_t(u,v)
\end{align*}
where, in the third equality, we used Lemma~\ref{norm_lapk_cov}.
\end{proof}

\begin{remark}
 In the case when $\pi \colon \tilde{V} \to V$ is a strong and regular covering with finite fibers, using remark~\ref{new_def_finite_fibers}, 
 equality~\eqref{new_hk} takes the form 
\begin{equation}\label{eq6}
 \tilde{h}_t(x,y)=\frac{1}{\sqrt{\vert\pi_x^{-1}(\pi_x(y))\vert}}h_t(\pi_x(x),\pi_x(y)).
\end{equation}
\end{remark}

\section{The heat kernel of the Cayley graph $\Gamma(C_2*C_3)$}
In this section, we establish a formula for the heat kernel of the Cayley graph $\Gamma:=\Gamma(G)$ of the group $G=\operatorname{PSL}_2\Z\simeq C_2*C_3$ of the presentation $\langle a,b\mid a^2=1,  b^3=1\rangle$.
In identification with $\operatorname{PSL}_2\Z$, we can represent the generators by the matrices
$a=\left(\begin{smallmatrix} 0 & -1 \\ 1 & 0 \end{smallmatrix}\right)$ and $b=\left(\begin{smallmatrix} 0 & -1 \\ 1 & 1 \end{smallmatrix}\right)$.

\subsection{Construction of the covering}
According to Fig.~\ref{fig:covering}, we observe that $\Gamma$
covers a weighted line $L_{\infty}$ on the vertex set $\Z$, where the line is essentially determined through the distance function on the graph~\eqref{CY_proj}.

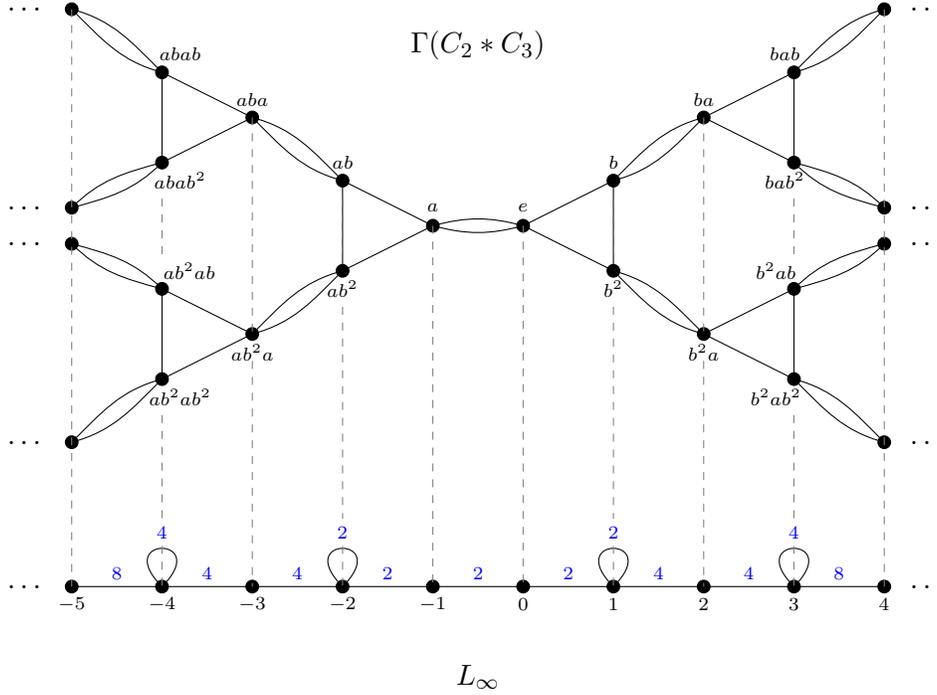
\begin{figure}
\center
\begin{tikzpicture}[scale=1.2]

\node at (-.5,2){$\Gamma(C_2*C_3)$};

\coordinate (0) at (0,0);
\coordinate (1_1) at (1,0.5);
\coordinate (1_2) at (1,-0.5);
\coordinate (2_1) at (2,1.2);
\coordinate (2_2) at (2,-1.2);

\begin{scope}[shift={(2,1.2)}]
\coordinate (3_1) at (1,0.5);
\coordinate (3_2) at (1,-0.5);
\coordinate (4_1) at (2,1.2);
\coordinate (4_2) at (2,-1);
\node at (2.5,1.2){$\dots$};
\node at (2.5,-1){$\dots$};
\end{scope}

\begin{scope}[shift={(2,-1.2)}]
\coordinate (3_3) at (1,0.5);
\coordinate (3_4) at (1,-0.5);
\coordinate (4_3) at (2,1);
\coordinate (4_4) at (2,-1.2);
\node at (2.5,1){$\dots$};
\node at (2.5,-1.2){$\dots$};
\end{scope}

\coordinate (-1) at (-1,0);
\begin{scope}[shift={(-1,0)}]
\coordinate (-2_1) at (-1,0.5);
\coordinate (-2_2) at (-1,-0.5);
\coordinate (-3_1) at (-2,1.2);
\coordinate (-3_2) at (-2,-1.2);
\end{scope}

\begin{scope}[shift={(-3,1.2)}]
\coordinate (-4_1) at (-1,0.5);
\coordinate (-4_2) at (-1,-0.5);
\coordinate (-5_1) at (-2,1.2);
\coordinate (-5_2) at (-2,-1);
\node at (-2.5,1.2){$\dots$};
\node at (-2.5,-1){$\dots$};
\end{scope}

\begin{scope}[shift={(-3,-1.2)}]
\coordinate (-4_3) at (-1,0.5);
\coordinate (-4_4) at (-1,-0.5);
\coordinate (-5_3) at (-2,1);
\coordinate (-5_4) at (-2,-1.2);
\node at (-2.5,1){$\dots$};
\node at (-2.5,-1.2){$\dots$};
\end{scope}

\draw[fill=black] (0) circle [radius=2pt];
\draw[fill=black] (1_1) circle [radius=2pt];
\draw[fill=black] (1_2) circle [radius=2pt];
\draw[fill=black] (2_1) circle [radius=2pt];
\draw[fill=black] (2_2) circle [radius=2pt];
\draw[fill=black] (3_1) circle [radius=2pt];
\draw[fill=black] (3_2) circle [radius=2pt];
\draw[fill=black] (3_3) circle [radius=2pt];
\draw[fill=black] (3_4) circle [radius=2pt];
\draw[fill=black] (4_1) circle [radius=2pt];
\draw[fill=black] (4_2) circle [radius=2pt];
\draw[fill=black] (4_3) circle [radius=2pt];
\draw[fill=black] (4_4) circle [radius=2pt];
\draw[fill=black] (-1) circle [radius=2pt];
\draw[fill=black] (-2_1) circle [radius=2pt];
\draw[fill=black] (-2_2) circle [radius=2pt];
\draw[fill=black] (-3_1) circle [radius=2pt];
\draw[fill=black] (-3_2) circle [radius=2pt];
\draw[fill=black] (-4_1) circle [radius=2pt];
\draw[fill=black] (-4_2) circle [radius=2pt];
\draw[fill=black] (-4_3) circle [radius=2pt];
\draw[fill=black] (-4_4) circle [radius=2pt];
\draw[fill=black] (-5_1) circle [radius=2pt];
\draw[fill=black] (-5_2) circle [radius=2pt];
\draw[fill=black] (-5_3) circle [radius=2pt];
\draw[fill=black] (-5_4) circle [radius=2pt];

\draw(0)--(1_1)--(1_2)--(0);
\draw (1_1) to[out=45,in= 195] (2_1); \draw (1_1) to[out=15,in= 225] (2_1);
\draw (1_2) to[out=-15,in=135] (2_2); \draw (1_2) to[out=-45,in= 165] (2_2);
\draw(2_1)--(3_1)--(3_2)--(2_1);
\draw(2_2)--(3_3)--(3_4)--(2_2);
\draw (3_1) to[out=45,in= 195] (4_1); \draw (3_1) to[out=15,in= 225] (4_1);
\draw (3_2) to[out=-15,in=135] (4_2); \draw (3_2) to[out=-45,in= 165] (4_2);
\draw (3_3) to[out=45,in= 195] (4_3); \draw (3_3) to[out=15,in= 225] (4_3);
\draw (3_4) to[out=-15,in=135] (4_4); \draw (3_4) to[out=-45,in= 165] (4_4);

\draw (0) to[out=165,in= 15] (-1); \draw (0) to[out=-165,in= -15] (-1);
\draw(-1)--(-2_1)--(-2_2)--(-1);
\draw (-2_1) to[out=135,in= -15] (-3_1); \draw (-2_1) to[out=165,in=-45] (-3_1);
\draw (-2_2) to[out=195,in=45] (-3_2); \draw (-2_2) to[out=225,in= 15] (-3_2);
\draw(-3_1)--(-4_1)--(-4_2)--(-3_1);
\draw(-3_2)--(-4_3)--(-4_4)--(-3_2);
\draw (-4_1) to[out=135,in= -15] (-5_1); \draw (-4_1) to[out=165,in=-45] (-5_1);
\draw (-4_2) to[out=195,in=45] (-5_2); \draw (-4_2) to[out=225,in= 15] (-5_2);
\draw (-4_3) to[out=135,in= -15] (-5_3); \draw (-4_3) to[out=165,in=-45] (-5_3);
\draw (-4_4) to[out=195,in=45] (-5_4); \draw (-4_4) to[out=225,in= 15] (-5_4);

\node at (-1,0.2) {\tiny$a$};
\node at (0,0.2) {\tiny$e$};
\node at (1,0.7) {\tiny$b$};
\node at (1,-0.7) {\tiny$b^2$};
\node at (2,1.4) {\tiny$ba$};
\node at (2,-1.4) {\tiny$b^2a$};
\node at (2.9,1.9) {\tiny$bab$};
\node at (2.9,0.5) {\tiny$bab^2$};
\node at (2.8,-0.5) {\tiny$b^2ab$};
\node at (2.8,-1.9) {\tiny$b^2ab^2$};

\node at (-2,0.7) {\tiny$ab$};
\node at (-2,-0.7) {\tiny$ab^2$};
\node at (-3,1.4) {\tiny$aba$};
\node at (-3,-1.4) {\tiny$ab^2a$};

\node at (-3.8,1.9) {\tiny$abab$};
\node at (-3.8,0.5) {\tiny$abab^2$};
\node at (-3.7,-0.5) {\tiny$ab^2ab$};
\node at (-3.8,-1.9) {\tiny$ab^2ab^2$};

\node at (-.5,-5){$L_{\infty}$};
\coordinate (0') at (0,-4);
\coordinate (1) at (1,-4);
\coordinate (2) at (2,-4);
\coordinate (3) at (3,-4);
\coordinate (4) at (4,-4);
\coordinate (-1') at (-1,-4);
\coordinate (-2) at (-2,-4);
\coordinate (-3) at (-3,-4);
\coordinate (-4) at (-4,-4);
\coordinate (-5) at (-5,-4);

\node at (0,-4.2){\tiny$0$};
\node at (1,-4.2){\tiny$1$};
\node at (2,-4.2){\tiny$2$};
\node at (3,-4.2){\tiny$3$};
\node at (4,-4.2){\tiny$4$};
\node at (-1,-4.2){\tiny$-1$};
\node at (-2,-4.2){\tiny$-2$};
\node at (-3,-4.2){\tiny$-3$};
\node at (-4,-4.2){\tiny$-4$};
\node at (-5,-4.2){\tiny$-5$};

\draw[fill=black] (0') circle [radius=2pt];
\draw[fill=black] (1) circle [radius=2pt];
\draw[fill=black] (2) circle [radius=2pt];
\draw[fill=black] (3) circle [radius=2pt];
\draw[fill=black] (4) circle [radius=2pt];
\draw[fill=black] (-1') circle [radius=2pt];
\draw[fill=black] (-2) circle [radius=2pt];
\draw[fill=black] (-3) circle [radius=2pt];
\draw[fill=black] (-4) circle [radius=2pt];
\draw[fill=black] (-5) circle [radius=2pt];
\node at (-5.5,-4){$\dots$};
\node at (4.5,-4){$\dots$};

\draw (-5)--(-4)--(-3)--(-2)--(-1')--(0')--(1)--(2)--(3)--(4);
\makeatletter
\tikzset{my loop/.style =  {to path={
  \pgfextra{\let\tikztotarget=\tikztostart}
  [looseness=8,min distance=8mm]
  \tikz@to@curve@path}
  }}  
\makeatletter 
\path (1) edge[my loop] node[above] {\tiny\emph{\color{blue} $2$}} (1);
\path (3) edge[my loop] node[above] {\tiny\emph{\color{blue} $4$}} (3);
\path (-2) edge[my loop] node[above] {\tiny\emph{\color{blue} $2$}} (-2);
\path (-4) edge[my loop] node[above] {\tiny\emph{\color{blue} $4$}} (-4);

\node at (0.5,-3.85){\tiny\emph{\color{blue} $2$}};
\node at (1.5,-3.85){\tiny\emph{\color{blue} $4$}};
\node at (2.5,-3.85){\tiny\emph{\color{blue} $4$}};
\node at (3.5,-3.85){\tiny\emph{\color{blue} $8$}};
\node at (-0.5,-3.85){\tiny\emph{\color{blue} $2$}};
\node at (-1.5,-3.85){\tiny\emph{\color{blue} $2$}};
\node at (-2.5,-3.85){\tiny\emph{\color{blue} $4$}};
\node at (-3.5,-3.85){\tiny\emph{\color{blue} $4$}};
\node at (-4.5,-3.85){\tiny\emph{\color{blue} $8$}};

\draw [dashed, gray] (0)--(0');
\draw [dashed, gray] (1,0.3)--(1,-0.4);
\draw [dashed, gray] (1,-0.9)--(1,-3.25);
\draw [dashed, gray] (1,-3.55)--(1);
\draw [dashed, gray] (2_1)--(2,-1.2);
\draw [dashed, gray] (2,-1.5)--(2);
\draw [dashed, gray] (3,0.3)--(3,-0.4);
\draw [dashed, gray] (3,-2.1)--(3,-3.25);
\draw [dashed, gray] (3,-3.55)--(3); 
\draw [dashed, gray] (4_1)--(4);
\draw [dashed, gray] (-1)--(-1');
\draw [dashed, gray] (-2,0.3)--(-2,-0.4);
\draw [dashed, gray] (-2,-0.9)--(-2,-3.25);
\draw [dashed, gray] (-2,-3.55)--(-2);
\draw [dashed, gray] (-3_1)--(-3,-1.2);
\draw [dashed, gray] (-3,-1.5)--(-3);
\draw [dashed, gray] (-4,0.3)--(-4,-0.4);
\draw [dashed, gray] (-4,-2.1)--(-4,-3.25);
\draw [dashed, gray] (-4,-3.55)--(-4); 
\draw [dashed, gray] (-5_1)--(-5);
\end{tikzpicture}
\caption{$\Gamma(C_2*C_3)$ covers the line $L_\infty$.} \label{fig:covering}
\end{figure}

In this graphical realization, we can see that $\Gamma$ is invariant under the action of $C_2$ given by the reflection with respect to the central horizontal axis passing through the vertices $a$ and~$e$.
\begin{figure}
\center
\begin{tikzpicture}[scale=1.2]

\coordinate (0) at (0,0);
\coordinate (1_1) at (1,0);
\coordinate (2_1) at (2,0);

\begin{scope}[shift={(2,0)}]
\coordinate (3_1) at (1,0.5);
\coordinate (3_2) at (1,-0.5);
\coordinate (4_1) at (2,1.2);
\coordinate (4_2) at (2,-1);
\node at (2.5,1.2){$\dots$};
\node at (2.5,-1){$\dots$};
\end{scope}

\coordinate (-1) at (-1,0);
\begin{scope}[shift={(-1,0)}]
\coordinate (-2_1) at (-1,0);
\coordinate (-3_1) at (-2,0);
\end{scope}

\begin{scope}[shift={(-3,0)}]
\coordinate (-4_1) at (-1,0.5);
\coordinate (-4_2) at (-1,-0.5);
\coordinate (-5_1) at (-2,1.2);
\coordinate (-5_2) at (-2,-1);
\node at (-2.5,1.2){$\dots$};
\node at (-2.5,-1){$\dots$};
\end{scope}

\draw[fill=black] (0) circle [radius=2pt];
\draw[fill=black] (1_1) circle [radius=2pt];

\draw[fill=black] (2_1) circle [radius=2pt];

\draw[fill=black] (3_1) circle [radius=2pt];
\draw[fill=black] (3_2) circle [radius=2pt];

\draw[fill=black] (4_1) circle [radius=2pt];
\draw[fill=black] (4_2) circle [radius=2pt];

\draw[fill=black] (-1) circle [radius=2pt];
\draw[fill=black] (-2_1) circle [radius=2pt];

\draw[fill=black] (-3_1) circle [radius=2pt];

\draw[fill=black] (-4_1) circle [radius=2pt];
\draw[fill=black] (-4_2) circle [radius=2pt];

\draw[fill=black] (-5_1) circle [radius=2pt];
\draw[fill=black] (-5_2) circle [radius=2pt];

\draw (0)--(1_1)--(2_1)--(3_1)--(3_2)--(2_1);
\draw (3_1)--(4_1);
\draw (3_2)--(4_2);
\draw (0)--(-1)--(-2_1)--(-3_1)--(-4_1)--(-4_2)--(-3_1);
\draw (-4_1)--(-5_1);
\draw (-4_2)--(-5_2);

\makeatletter
\tikzset{my loop/.style =  {to path={
  \pgfextra{\let\tikztotarget=\tikztostart}
  [looseness=8,min distance=8mm]
  \tikz@to@curve@path}
  }}  
\makeatletter 
\path (1_1) edge[my loop] node[above] {\tiny\emph{\color{blue} $2$}} (1_1);
\path (-2_1) edge[my loop] node[above] {\tiny\emph{\color{blue} $2$}} (-2_1);

\node at (0.5,0.15){\tiny\emph{\color{blue} $2$}};
\node at (1.5,0.15){\tiny\emph{\color{blue} $4$}};
\node at (2.5,0.45){\tiny\emph{\color{blue} $2$}};
\node at (2.5,-0.45){\tiny\emph{\color{blue} $2$}};
\node at (3.15,0){\tiny\emph{\color{blue} $2$}};
\node at (3.4,1){\tiny\emph{\color{blue} $4$}};
\node at (3.4,-0.9){\tiny\emph{\color{blue} $4$}};

\node at (-0.5,0.15){\tiny\emph{\color{blue} $2$}};
\node at (-1.5,0.15){\tiny\emph{\color{blue} $2$}};
\node at (-2.5,0.15){\tiny\emph{\color{blue} $4$}};
\node at (-3.5,0.45){\tiny\emph{\color{blue} $2$}};
\node at (-3.5,-0.45){\tiny\emph{\color{blue} $2$}};
\node at (-4.15,0){\tiny\emph{\color{blue} $2$}};
\node at (-4.4,1){\tiny\emph{\color{blue} $4$}};
\node at (-4.4,-0.9){\tiny\emph{\color{blue} $4$}};
\end{tikzpicture}
\caption{The quotient weighted graph $\Gamma(C_2*C_3)/C_2$.} \label{fig:quotient}
\end{figure}
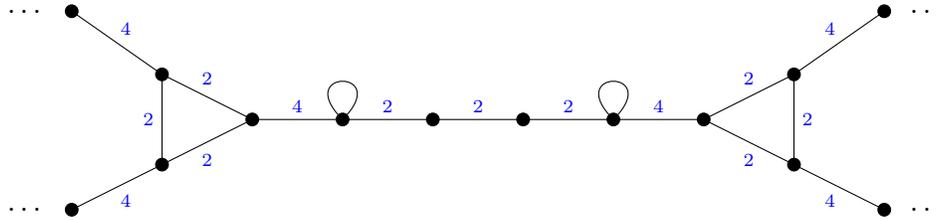

The resulting quotient graph $\Gamma/C_2$ is invariant under another action of $C_2$ given by the reflection with respect to its own central horizontal axis, see Fig.~\ref{fig:quotient}. By continuing similarly, we see that every resulting quotient graph is again invariant under a certain action of $C_2$. Hence, $L_{\infty}$ is the quotient weighted graph of $\Gamma$ with respect to the infinite group given by the product of infinitely many $C_2$'s, $G=C_2^{\times \infty}=C_2\times C_2\times\dotsm$.  It is also clear that the orbits of this action are finite. Therefore, by Proposition \ref{Prop2}, we conclude that $\Gamma$ covers $L_{\infty}$.

Moreover, this covering is strong and regular, where the distinguished vertex is $-1$ or $0$. Thus, our strategy is to first solve the projected spectral problem on $\Z$, which will yield an explicit formula for the heat kernel on $\Z$. We will then use formula \eqref{eq6} to obtain the heat kernel on $\Gamma$.

\begin{remark}
Note that $\Gamma$ is also invariant under the action of $C_2$ given by the reflection with respect to the vertical axis passing between vertices $a$ and $e$, but in this case there are no one-element orbits. 
\end{remark}

\subsection{Explicit expression for the $\pi$-projected Laplacian $\mathcal{L}^{pr}$ on $\Z$}

Let $\pi:G\to \Z$ be the Chung--Yau covering defined above.
Since there is an automorphism of $\Z$ such that $n\mapsto-n-1$ for $n\in\Z$, it will induce an operator of order two that will commute with the $\pi$-projected Laplacian on $\Z$.

Define, for $n\geq0$, $\alpha_n:=\vert \pi^{-1}(2n)\vert$ and $\beta_n:=\vert \pi^{-1}(2n-1)\vert$. Then, by definition of the covering, we have
$\alpha_0=1$, $\alpha_n=\beta_n$, $\beta_{n}=2\alpha_{n-1}$. 
Thus, we have $\alpha_{n}=2\alpha_{n-1}=2^{n}\alpha_0=2^{n}$ so
$$
\beta_n=\alpha_{n}=2^{n}.
$$
The weight function $w$ is defined, for $n\geq0$, by
\begin{equation}\label{w_L^pr}
\begin{cases}{ }
 w(2n,2n)=0 \\
 w(2n+1,2n+1)=2^{n+1} \\ 
 w(2n-1,2n)=w(2n,2n+1)=2^{n+1} 
\end{cases}
\end{equation}
and all other weights are 0.
The degree of a vertex $m\geq0$ is
$$ d_m=
\begin{cases}
 2^{n+2} & \text{if } m =2n \\
 2^{n+3} & \text{if } m=2n+1.
\end{cases}
$$
Thus, the non-zero matrix coefficients of the normalized Laplacian \eqref{normLaplacian}, for $n\geq0$, are 
\begin{equation}\label{Lpr}
\begin{cases}
 \mathcal{L}^{pr}(2n,2n)=1 \\
 \mathcal{L}^{pr}(2n+1,2n+1)=\frac{3}{4} \\
 \mathcal{L}^{pr}(2n-1,2n)=-\frac{1}{2} \\
 \mathcal{L}^{pr}(2n,2n+1)=-\frac{1}{2\sqrt{2}}.
\end{cases}
\end{equation}

As a consequence of the above observations, we obtain the following.
\begin{proposition}
 The $\pi$-projected normalized Laplacian $\mathcal{L}^{pr}$ on $\Z$ explicitly acts on a function $f:\Z \to \C$, for $m\geq 0$, as
\begin{equation} \label{Lp}
 (\mathcal{L}^{pr}f)(m)=
 \begin{cases} 
    f(m)-\frac{1}{2}f(m-1)-\frac{1}{2\sqrt{2}}f(m+1) &\text{if } m \text{ is even}  \\
   \frac{3}{4}f(m)-\frac{1}{2\sqrt{2}}f(m-1)-\frac{1}{2}f(m+1) &\text{if } m \text{ is odd}
\end{cases}
\end{equation}
and using the symmetry $m\mapsto -m-1$,
\begin{equation} \label{Lm}
 (\mathcal{L}^{pr}f)(-m-1)=
 \begin{cases} 
    f(-m-1)-\frac{1}{2}f(-m)-\frac{1}{2\sqrt{2}}f(-m-2) &\text{if } m \text{ is even}  \\
   \frac{3}{4}f(-m-1)-\frac{1}{2\sqrt{2}}f(-m)-\frac{1}{2}f(-m-2) &\text{if } m \text{ is odd}.
\end{cases}
\end{equation}
\end{proposition}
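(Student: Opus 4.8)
The plan is to read off the action of $\mathcal{L}^{pr}$ directly from its matrix coefficients, treating the two displayed formulas separately: equation~\eqref{Lp} is a bare computation, while~\eqref{Lm} is obtained from~\eqref{Lp} by an involutive symmetry.

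For~\eqref{Lp}, the key structural observation is that $L_\infty$ is supported on a line, so two \emph{distinct} vertices of $\Z$ are joined by an edge only when they are consecutive integers; together with the loops this means that $\mathcal{L}^{pr}$ is tridiagonal, i.e. the only possibly nonzero entries in the row indexed by $m$ are $\mathcal{L}^{pr}(m,m)$ and $\mathcal{L}^{pr}(m,m\pm 1)$. Hence, by the definition of matrix coefficients, $(\mathcal{L}^{pr}f)(m)=\mathcal{L}^{pr}(m,m-1)f(m-1)+\mathcal{L}^{pr}(m,m)f(m)+\mathcal{L}^{pr}(m,m+1)f(m+1)$. It then remains to substitute the values computed in~\eqref{Lpr}, splitting according to the parity of $m$ and using the symmetry $\mathcal{L}^{pr}(u,v)=\mathcal{L}^{pr}(v,u)$ of~\eqref{normLaplacian} to identify the subdiagonal entries. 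For $m=2n$ even the three entries $(m-1,m,m+1)$ are $-\tfrac12,\,1,\,-\tfrac{1}{2\sqrt2}$, while for $m=2n+1$ odd they are $-\tfrac{1}{2\sqrt2},\,\tfrac34,\,-\tfrac12$, which yields the two cases of~\eqref{Lp}. This step is routine; the only care required is the index bookkeeping, since the coefficients depend on the parity of the vertex and $\mathcal{L}^{pr}(2n,2n+1)\neq\mathcal{L}^{pr}(2n-1,2n)$.

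For~\eqref{Lm}, I would first upgrade the set map $\sigma\colon n\mapsto -n-1$ to a genuine automorphism of the weighted graph $L_\infty$. This is the point deserving the most attention: the weights and degrees in~\eqref{w_L^pr} were recorded only for $n\ge 0$, so one must verify that $\sigma$ preserves the weight function $w$ (and hence the degrees) across the whole line. The cleanest justification is that $\sigma$ is induced on the quotient $L_\infty=\Gamma/C_2^{\times\infty}$ by the left translation $g\mapsto ag$ of $\Gamma$, which is a weight-preserving automorphism of the Cayley graph interchanging $e$ and $a$; a short case analysis on reduced words shows $\pi(ag)=\sigma(\pi(g))$ for all $g\in G$, so this translation permutes the $\pi$-fibers exactly as $\sigma$ permutes the vertices of $L_\infty$, and descends to $\sigma$. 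Since the translation lies in $\operatorname{Aut}(\Gamma)$ and the quotient weights are sums of $\tilde w$ over fibers, it follows that $w(\sigma u,\sigma v)=w(u,v)$, establishing $\sigma\in\operatorname{Aut}(L_\infty)$. Alternatively one checks directly that $\sigma$ matches consecutive edge weights and loop weights, which simultaneously pins down the weights at the negative vertices.

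Once $\sigma$ is known to be an automorphism, it commutes with $\mathcal{L}^{pr}$, equivalently $\mathcal{L}^{pr}(\sigma u,\sigma v)=\mathcal{L}^{pr}(u,v)$ for all $u,v$. Writing $-m-1=\sigma(m)$ and reindexing the sum $\sum_k\mathcal{L}^{pr}(-m-1,k)f(k)$ by $k=\sigma(j)=-j-1$ turns it into $\sum_j\mathcal{L}^{pr}(m,j)f(-j-1)$, i.e. into $(\mathcal{L}^{pr}g)(m)$ for $g=f\circ\sigma$. Applying the already-proved formula~\eqref{Lp} to $g$ and translating back via $g(m)=f(-m-1)$, $g(m-1)=f(-m)$ and $g(m+1)=f(-m-2)$ yields exactly~\eqref{Lm}, with the even/odd dichotomy preserved because $\sigma$ sends even vertices to odd ones and vice versa. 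The main obstacle throughout is thus not any hard estimate but the careful verification that $\sigma$ is a weight-preserving automorphism and the consistent handling of parity in the index substitutions.
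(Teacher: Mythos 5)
Your proposal is correct and follows essentially the same route as the paper, which derives \eqref{Lp} by reading off the tridiagonal matrix coefficients \eqref{Lpr} and obtains \eqref{Lm} from the asserted symmetry $n\mapsto -n-1$. Your extra justification that this symmetry is a genuine weight-preserving automorphism of $L_\infty$ — induced by the left translation $g\mapsto ag$ on $\Gamma$, which satisfies $\pi(ag)=-\pi(g)-1$ and permutes the $\pi$-fibers — is a correct and welcome elaboration of a step the paper leaves implicit.
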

We note that $\mathcal{L}^{pr}$ is a self-adjoint operator in the Hilbert space $\ell^2(\Z)$.
\subsection{Solving the spectral problem for $\mathcal{L}^{pr}$}
In this section, we solve the spectral problem in the Hilbert space $\ell^2(\Z)$ for the self-adjoint operator $\mathcal{L}^{pr}$.

Define, for $n\geq0$, $u_n:=f(2n), v_n:=f(2n+1)$ and $\check{u}_n:=f(-2n-1), \check{v}_n:=f(-2n-2)$. Then, with the new notations, the eigenvalue problem $\mathcal{L}^{pr}f=\lambda f$ with formula \eqref{Lp}, becomes
 \begin{numcases} { }
2(1-\lambda)u_n=v_{n-1}+\frac{1}{\sqrt{2}}v_n \label{l1} \\
2(\frac{3}{4}-\lambda)v_n=\frac{1}{\sqrt{2}}u_{n}+u_{n+1} \label{l2}
\end{numcases}
while with formula \eqref{Lm} it becomes
$$
\begin{cases}
2(1-\lambda)\check{u}_n=\check{v}_{n-1}+\frac{1}{\sqrt{2}}\check{v}_n \label{l1m} \\
2(\frac{3}{4}-\lambda)\check{v}_n=\frac{1}{\sqrt{2}}\check{u}_{n}+\check{u}_{n+1}.  \label{l2m}
\end{cases}
$$
We also have the following sewing equations
\begin{equation}\label{se}
 v_{-1}=f(-1)=\check{u}_0 \quad \text{and} \quad \check{v}_{-1}=f(0)=u_0.
\end{equation}

First, assume that $\lambda=1$. Then, from equation \eqref{l1}, we get that 
$$
v_n=-\sqrt{2}v_{n-1}=(-\sqrt{2})^n v_0
$$
which immediately implies that $v_n=0$ for all $n\ge-1$, otherwise $v_n$ would not be square-summable. For the same reason, we conclude that $\check{v}_n=0$ for all $n\ge-1$.
By substituting $v_n=0$ in equation \eqref{l2}, we get that 
\begin{equation}
 u_{n+1}=-\frac{1}{\sqrt{2}}u_n=\Big(-\frac{1}{\sqrt{2}}\Big)^{n+1}u_0. \label{UN}
\end{equation}
Using the second sewing equation in \eqref{se}, we obtain $u_0=0$. Therefore, from equation \eqref{UN}, we conclude that $u_n=0$ for all $n\ge0$.
Similarly, we conclude that $\check{u}_n=0$ for all $n\ge0$. Therefore, $\lambda=1$ is not an eigenvalue of $\mathcal{L}^{pr}$.

Thus, we suppose that $\lambda \neq 1$. Multiplying equation~\eqref{l2} by $2(1-\lambda)$ and using equation~\eqref{l1}, we obtain
\begin{align*}
& 4(1-\lambda)(\frac{3}{4}-\lambda)v_n=\frac{1}{\sqrt{2}}\Big(v_{n-1}+\frac{1}{\sqrt{2}}v_{n}\Big)+v_{n}+\frac{1}{\sqrt{2}}v_{n+1}=\frac{3}{2}v_n+\frac{1}{\sqrt{2}}(v_{n-1}+v_{n+1}) \\
& \iff \sqrt{2}\Big(4\lambda^2-7\lambda+\frac{3}{2}\Big)v_n=v_{n-1}+v_{n+1}
\end{align*}
which is a linear system with constant coefficients. We use the standard approach by substituting $v_n=\xi^n$ and obtain the spectral equation
\begin{equation}\label{speceq1}
 \sqrt{2}\Big(4\lambda^2-7\lambda+\frac{3}{2}\Big)=\xi^{-1}+\xi.
\end{equation}
Both $\xi^{-1}$ and $\xi$ are solutions of \eqref{speceq1} and therefore the general solution of the equation is of the form $v_n=\alpha\xi^n+\beta\xi^{-n}$ with $\alpha,\beta\in\C$, where $\xi$ is defined such that equation~\eqref{speceq1} is satisfied.
The case $\vert \xi \vert>1$ corresponds to eigenvalues, while the case $\vert \xi \vert=1$ corresponds to continuous spectrum.
So we have
\begin{equation}
 v_n=\alpha \xi^n + \beta \xi^{-n}
\end{equation}
and from equation~\eqref{l1}, we have 
\begin{equation}
u_n=\frac{v_{n-1}+\frac{1}{\sqrt{2}}v_n}{2(1-\lambda)}. \label{un}
\end{equation}

\vskip 0.3cm
{\it The case $\vert \xi \vert>1$.}
Assume, without loss of generality, that $\vert \xi \vert>1$ (since the spectral equation~\eqref{speceq1} is invariant under the change $\xi \leftrightarrow \xi^{-1}$). 
Then, since we are looking for eigenfunctions $f\in\ell^2(\Z)$, we need to have $\alpha=0$, 
otherwise $f$ would not be a square-summable function. Thus, we have
\begin{equation}\label{uv}
 v_n=\beta \xi^{-n}, \quad u_n=\gamma \xi^{-n}
\end{equation} 
since $u_n$ is a linear combination of $v_{n-1}$ and $v_n$, and we also have the same expressions for $\check{v}_n$ and $\check{u}_n$ with the changes $\beta \leftrightarrow \check{\beta}, \gamma \leftrightarrow \check{\gamma}$
\begin{equation}\label{ouv}
 \check{v}_n=\check{\beta} \xi^{-n}, \quad \check{u}_n=\check{\gamma} \xi^{-n}.
\end{equation} 
Therefore, the sewing equations \eqref{se} are equivalent to 
\begin{equation}\label{nse}
 \beta \xi=\check{\gamma} \quad \text{and} \quad \check{\beta} \xi=\gamma.
\end{equation}
Using expression \eqref{uv} in equations \eqref{l1} and \eqref{l2}, we obtain
$$
\begin{cases} { }
2(1-\lambda)\gamma=\beta \Big(\xi+\frac{1}{\sqrt{2}}\Big) \\
2(\frac{3}{4}-\lambda)\beta=\gamma\Big(\xi^{-1}+\frac{1}{\sqrt{2}}\Big)
\end{cases}
$$
and using the second sewing equation $\gamma= \check{\beta}\xi$, we obtain 
\begin{equation}\label{nl1l2}
 \begin{cases} { }
2(1-\lambda)=\frac{\beta}{ \check{\beta}} \Big(1+\frac{1}{\sqrt{2}\xi}\Big) \\
2(\frac{3}{4}-\lambda)=\frac{\check{\beta}}{\beta}\Big(1+\frac{\xi}{\sqrt{2}}\Big)
\end{cases}
\end{equation}
which is consistent with the spectral equation \eqref{speceq1}.
Furthermore, using expression~\eqref{uv} for $v_n$ in expression~\eqref{un}, we have 
$$
u_n=\frac{1}{2(1-\lambda)}\Big(v_{n-1}+\frac{1}{\sqrt{2}}v_n\Big)=\frac{\beta}{2(1-\lambda)}\Big(\xi^{1-n}+\frac{1}{\sqrt{2}}\xi^{-n}\Big)
$$
and we also obtain the same expression for $\check{u}_n$ with the change $\beta \leftrightarrow \check{\beta}$.
Thus, the sewing equations~\eqref{se} are equivalent to 
$$
\begin{cases} { }
\beta \xi=\frac{\check{\beta}}{2(1-\lambda)}\Big(\xi+\frac{1}{\sqrt{2}}\Big) \\
\check{\beta} \xi=\frac{\beta}{2(1-\lambda)}\Big(\xi+\frac{1}{\sqrt{2}}\Big)
\end{cases}
\iff 
\frac{\beta}{ \check{\beta}}=\frac{\check{\beta}}{\beta}
\iff
\Big(\frac{\beta}{ \check{\beta}}\Big)^2=1.
$$
Let us denote by 
\begin{equation} \label{def_epsilon}
 \epsilon:=\frac{\beta}{ \check{\beta}}\in\{\pm1\}.
\end{equation}
Then, system~\eqref{nl1l2} is equivalent to 
\begin{equation}\label{fl1l2}
  \begin{cases} { }
2\epsilon(1-\lambda)=1+\frac{1}{\sqrt{2}\xi} \\
2\epsilon(\frac{3}{4}-\lambda)=1+\frac{\xi}{\sqrt{2}}
\end{cases}
\end{equation}
and by subtracting the second equation from the first, we obtain 
$$
\frac{\epsilon}{2}=\frac{1}{\sqrt{2}}(\xi^{-1}-\xi) \iff \xi^2+\frac{\epsilon}{\sqrt{2}}\xi=1
$$
that gives us four different solutions 
$$
\xi_{\epsilon,\pm}=\frac{-\epsilon\pm3}{2\sqrt{2}}.
$$
Since we are looking for a solution $\xi$ such that $\vert \xi \vert >1$, we only keep the solutions satisfying this condition, one for $\epsilon=1$ and one for $\epsilon=-1$
$$
\xi_\epsilon=-\epsilon\sqrt{2}.
$$ 
Using this solution in \eqref{fl1l2}, we obtain two discrete eigenvalues, one for $\epsilon=1$ and one for $\epsilon=-1$,  
$$
2\epsilon(1-\lambda)=1-\frac{\epsilon}{2} \iff \lambda_\epsilon=\frac{5-2\epsilon}{4}=
\begin{cases}
 \frac{3}{4} & \text{if }\epsilon=1 \\
 \frac{7}{4} & \text{if }\epsilon=-1.
\end{cases}
$$
The corresponding eigenfunctions are given by
$$
f_\epsilon(m)=
\begin{cases}
 \gamma \xi_\epsilon^{-n}=\epsilon\beta\xi_\epsilon^{1-n} & \text{if } m=2n \\
 \beta\xi_\epsilon^{-n}& \text{if } m=2n+1 \\
 \check{\gamma} \xi_\epsilon^{-n}=\beta\xi_\epsilon^{1-n} & \text{if } m=-2n-1 \\
 \check{\beta} \xi_\epsilon^{-n}=\epsilon\beta\xi_\epsilon^{-n} & \text{if } m=-2n-2
\end{cases},
\quad \text{where } \xi_\epsilon=-\epsilon\sqrt{2}
$$
and we used expressions \eqref{uv}, \eqref{ouv}, \eqref{nse} and \eqref{def_epsilon}.
We compute the norm of $f_\epsilon$ in order to fix the remaining free parameter $\beta$
$$
\lVert f_\epsilon \rVert^2=\sum_{n=0}^\infty \big( \vert u_n\vert^2 + \vert v_n\vert^2 + \vert\overline{u}_n\vert^2 + \vert\overline{v}_n\vert^2\big)
=2\vert\beta\vert^2 (\vert\xi_\epsilon\vert^2+1) \sum_{n=0}^\infty \vert\xi_\epsilon\vert^{-2n} 
=12\vert\beta\vert^2
$$
where, in the last equality, we used
 the fact that $\vert\xi_\epsilon\vert^2=2$.
Therefore, by choosing $\beta=\frac{1}{2\sqrt{3}}$, we obtain eigenfunctions $f_\epsilon$ of norm $1$ defined as
$$
f_\epsilon(m)=
\begin{cases}
 \frac{-1}{\sqrt{6}}(-\epsilon\sqrt{2})^{-n} & \text{if } m=2n \\
 \frac{1}{2\sqrt{3}}(-\epsilon\sqrt{2})^{-n} & \text{if } m=2n+1 \\
 \frac{-\epsilon}{\sqrt{6}}(-\epsilon\sqrt{2})^{-n} & \text{if } m=-2n-1 \\
 \frac{\epsilon}{2\sqrt{3}}(-\epsilon\sqrt{2})^{-n} & \text{if } m=-2n-2.
\end{cases}
$$

\vskip 0.3cm
{\it The case $\vert \xi \vert=1$.}
Finally, assume that $\vert \xi \vert=1$, that is $\xi=e^{ix}$, where, using the symmetry of the spectral equation under $\xi \leftrightarrow \xi^{-1}$, we can always assume that $x\in[0,\pi]$. We rewrite the spectral equation~\eqref{speceq1} as 
$$
4(1-\lambda)\Big(\frac{3}{4}-\lambda\Big)-\frac{3}{2}=\frac{1}{\sqrt{2}}(\xi^{-1}+\xi)=\frac{1}{\sqrt{2}}(e^{-ix}+e^{ix})=\sqrt{2}\cos(x).
$$
Introducing a new variable $\nu$ through $2(1-\lambda)=\nu+\frac{1}{4}$, we have $2(\frac{3}{4}-\lambda)=\nu-\frac{1}{4}$, which converts the previous equality to
\begin{equation}\label{mu1}
 \nu^2-\frac{1}{16}=\frac{3}{2}+\sqrt{2}\cos(x)
\iff
\nu^2=\frac{25}{16}+\sqrt{2}\cos(x).
\end{equation}
Therefore, we have 
\begin{equation}
 \nu_{\mu,x}=\mu R_x, \quad \text{where } R_x:=\sqrt{\frac{25}{16}+\sqrt{2}\cos(x)}>0, \ \mu\in\{\pm 1\}.
\end{equation}
Note that $R_x=R_{-x}$. By recalling the definition above of $\nu$ in terms of $\lambda$, we obtain an expression for $\lambda$
$$
\lambda_{\mu,x}=\frac{7}{8}-\frac{\nu_{\mu,x}}{2}=\frac{7}{8}-\frac{\mu}{2}\sqrt{\frac{25}{16}+\sqrt{2}\cos(x)}, \quad \text{where} \ x\in[0,\pi].
$$
Recall that 
\begin{equation}\label{newv}
 v_n=\alpha \xi^n + \beta \xi^{-n}, \quad \alpha,\beta\in\C
\end{equation}
and since $u_n$ is a linear combination of $v_{n-1}$ and $v_n$, we have
\begin{equation}\label{newu}
 u_n=\gamma \xi^n + \delta \xi^{-n}, \quad \gamma,\delta\in\C.
\end{equation}
Consider the operator $P$ defined by its action on a function $f:\Z \to \C$, by 
$$
Pf(n):=f(-n-1).
$$
Since $P^2=\operatorname{Id}$ and $P$ commutes with $\mathcal{L}^{pr}$, we can always assume that a generalized eigenfunction is also an eigenfunction of $P$, and we have 
\begin{equation}
 Pf=\epsilon f, \quad \text{with } \epsilon\in\{\pm1\}. \label{epsilon}
\end{equation}
Thus, using \eqref{epsilon}, we can write $\check{v}_n$ as
\begin{equation}\label{nov}
\check{v}_n=Pf(2n+1)=\epsilon v_n=\epsilon(\alpha \xi^n + \beta \xi^{-n})
\end{equation}
and similarly, we can also write $\check{u}_n$ as
\begin{equation}\label{nou}
\check{u}_n=Pf(2n)=\epsilon u_n=\epsilon(\gamma \xi^n + \delta \xi^{-n}).
\end{equation}
Let us use \eqref{newv} and \eqref{newu} to rewrite \eqref{l1} and \eqref{l2} in terms of $\nu$ and $\xi$.
From \eqref{l1}, we have
$$
\Big(\nu+\frac{1}{4}\Big)(\gamma \xi^n + \delta \xi^{-n})=\alpha \xi^{n-1} + \beta \xi^{1-n}+\frac{1}{\sqrt{2}}(\alpha \xi^n + \beta \xi^{-n})
$$
and by equaling the coefficients of $\xi^n$ and $\xi^{-n}$, we obtain 
\begin{equation}\label{gd}
 \begin{cases} 
      (\nu+\frac{1}{4})\gamma=(\xi^{-1}+\frac{1}{\sqrt{2}})\alpha
      \\
      (\nu+\frac{1}{4})\delta=(\xi+\frac{1}{\sqrt{2}})\beta
\end{cases}
\iff 
\begin{cases} 
      \gamma=\frac{\xi^{-1}+\frac{1}{\sqrt{2}}}{\nu+\frac{1}{4}}\alpha
      \\
      \delta=\frac{\xi+\frac{1}{\sqrt{2}}}{\nu+\frac{1}{4}}\beta.
\end{cases}
\end{equation}
Similarly, from \eqref{l2}, we obtain expressions for $\alpha$ and $\beta$ 
$$
\Big(\nu-\frac{1}{4}\Big)(\alpha \xi^n + \beta \xi^{-n})=\gamma \xi^{n+1} + \delta \xi^{-n-1}+\frac{1}{\sqrt{2}}(\gamma \xi^n + \delta \xi^{-n})
$$
and by equaling the coefficients of $\xi^n$ and $\xi^{-n}$, we obtain 
\begin{equation}\label{ab}
\begin{cases} 
      (\nu-\frac{1}{4})\alpha=(\xi+\frac{1}{\sqrt{2}})\gamma
      \\
      (\nu-\frac{1}{4})\beta=(\xi^{-1}+\frac{1}{\sqrt{2}})\delta
\end{cases}
\iff 
\begin{cases} 
      \alpha=\frac{\xi+\frac{1}{\sqrt{2}}}{\nu-\frac{1}{4}}\gamma
      \\
      \beta=\frac{\xi^{-1}+\frac{1}{\sqrt{2}}}{\nu-\frac{1}{4}}\delta.
\end{cases}
\end{equation}
By substituting these into expression \eqref{gd} for $\gamma$ and $\delta$, we obtain expression \eqref{mu1} for $\nu^2$, so expressions \eqref{gd} and \eqref{ab} are equivalent.
Using expressions \eqref{newv}, \eqref{newu}, \eqref{nov} and \eqref{nou}, the sewing equations \eqref{se}  reduce to one relation
\begin{equation}\label{nse12}
 \epsilon(\gamma+\delta)=\alpha\xi^{-1}+\beta\xi 
\end{equation}
which, by using \eqref{gd}, can be rewritten  as  follows:
\begin{align}
 & \epsilon\Bigg(\frac{\xi^{-1}+\frac{1}{\sqrt{2}}}{\nu+\frac{1}{4}}\alpha+\frac{\xi+\frac{1}{\sqrt{2}}}{\nu+\frac{1}{4}}\beta\Bigg)=\alpha\xi^{-1}+\beta\xi \notag \\
 \iff
& \alpha\Big(\frac{\epsilon}{\sqrt{2}}-\xi^{-1}(\nu+\frac{1}{4}-\epsilon)\Big)=\beta\Big(\xi(\nu+\frac{1}{4}-\epsilon)-\frac{\epsilon}{\sqrt{2}}\Big) \notag \\
 \iff
& \frac{\alpha}{\beta}=-\frac{\xi(\nu+\frac{1}{4}-\epsilon)-\frac{\epsilon}{\sqrt{2}}}{\xi^{-1}(\nu+\frac{1}{4}-\epsilon)-\frac{\epsilon}{\sqrt{2}}}. \label{al/be}
\end{align}
Observe that the denominator is the complex conjugate of the numerator. Since the coefficients $\alpha, \beta,\gamma,\delta$ are determined up to a common multiplicative factor, we can choose $\alpha$ arbitrarily and $\beta,\gamma,\delta$ are then automatically determined from~\eqref{al/be} and \eqref{gd}. Let us  choose 
\begin{equation}
 \alpha:=\frac1{2i}\big(\xi(\nu+\frac{1}{4}-\epsilon)-\frac{\epsilon}{\sqrt{2}}\big), \quad \epsilon\in\{\pm1\}.
\end{equation}
Then, from~\eqref{al/be} we obtain 
\begin{equation}
\beta=\overline{\alpha}
\end{equation}
where $\overline{\alpha}$ is the complex conjugate of $\alpha$, and from \eqref{gd} we obtain 
\begin{equation}
\gamma=\frac1{2i}\big(1+\epsilon(\frac{1}{4}-\epsilon)+\frac{\xi}{\sqrt{2}}\big)
\end{equation}
and 
\begin{equation}
\delta=\overline{\gamma}.
\end{equation}
With this normalisation,  with $x\in[0,\pi]$, $\mu, \epsilon \in\{\pm 1\}$, $n\ge0$, the generalized eigenvectors are real
\begin{align*}
  f_{x,\mu, \epsilon}(2n) & =u_n=\gamma\xi^n+\delta\xi^{-n}=\gamma\xi^n+\overline{\gamma}\xi^{-n}=2\operatorname{Re} (\gamma\xi^n)=\operatorname{Im}\Big((1+\epsilon(\frac{1}{4}-\nu))\xi^n+\frac{\xi^{n+1}}{\sqrt{2}}\Big)\\
 &=(1+\frac{\epsilon}{4}-\epsilon\mu R_x)\sin(nx)+\frac{1}{\sqrt{2}}\sin((n+1)x),
\end{align*}
where in the last equality we used the fact that $\xi=e^{ix}$ and $\nu=\mu R_x$, and similarly
\begin{align*}
  f_{x,\mu, \epsilon}(2n+1) & =v_n=\alpha\xi^n+\beta\xi^{-n}=\alpha\xi^n+\overline{\alpha}\xi^{-n} \\
 &=(\mu R_x+\frac{1}{4}-\epsilon)\sin((m+1)x)-\frac{\epsilon}{\sqrt{2}}\sin(nx) ,
\end{align*}
\begin{equation*}
  f_{x,\mu, \epsilon}(-2n-1)=\epsilon f_{x,\mu, \epsilon}(2n), \quad f_{x,\mu, \epsilon}(-2n-2)=\epsilon f_{x,\mu, \epsilon}(2n+1).
\end{equation*}
The corresponding generalized eigenvalues are
$$
\lambda_{\mu,x}=\frac{7}{8}-\frac{\mu}{2}\sqrt{\frac{25}{16}+\sqrt{2}\cos(x)}.
$$
The following proposition is a summary of what we have done up to now.
\begin{proposition}\label{spectrum_L^pr}
 The $\pi$-projected Laplacian $\mathcal{L}^{pr}$ on $\Z$ has two discrete eigenvalues, parameterized by $\epsilon\in\{\pm 1\}$,  
$$
 \lambda_\epsilon=\frac{5-2\epsilon}{4}=
\begin{cases}
 \frac{3}{4} & \text{if }\epsilon=1 \\
 \frac{7}{4} & \text{if }\epsilon=-1
\end{cases}
$$
with the corresponding real eigenvectors possessing the following symmetry property
$$
f_\epsilon(-m-1)=\epsilon f_\epsilon(m), \quad \forall m\in\Z.
$$
For $m\ge0$,
\begin{numcases}{f_\epsilon(m)=}
 \frac{-1}{\sqrt{6}}(-\epsilon\sqrt{2})^{-n} & if $m=2n$ \label{dfpe} \\
 \frac{1}{2\sqrt{3}}(-\epsilon\sqrt{2})^{-n} & if $m=2n+1$. \label{dfpo}
\end{numcases}
It also has the following generalized eigenvalues, parameterized by $x\in[0,\pi]$ and $\mu\in\{\pm 1\}$,
$$
\lambda_{\mu,x}=\frac{7}{8}-\frac{\mu}{2}R_x
$$
with 
$$
R_x=\sqrt{\frac{25}{16}+\sqrt{2}\cos(x)},
$$
where the corresponding (generalized) eigenspaces are two dimensional so that one can choose a basis indexed by variable $\epsilon\in\{\pm1\}$ so that
$$
f_{x,\mu, \epsilon}(-m-1)=\epsilon f_{x,\mu, \epsilon}(m), \quad \forall m\in\Z.
$$
For $m\ge0$,
\begin{numcases}{f_{x,\mu, \epsilon}(m)=}
 (1+\frac{\epsilon}{4}-\epsilon\mu R_x)\sin(nx)+\frac{1}{\sqrt{2}}\sin((n+1)x) & if $m=2n$ \label{cfpe} \\
 (\mu R_x+\frac{1}{4}-\epsilon)\sin((n+1)x)-\frac{\epsilon}{\sqrt{2}}\sin(nx) & if $m=2n+1$. \label{cfpo}
\end{numcases}
\end{proposition}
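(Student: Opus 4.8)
The plan is to turn the eigenvalue problem $\mathcal{L}^{pr}f=\lambda f$ for the self-adjoint operator $\mathcal{L}^{pr}$ on $\ell^2(\Z)$ into a constant-coefficient linear recurrence, and then to read off the two kinds of spectrum by distinguishing the modulus of the characteristic root. The first move is to exploit the order-two symmetry $m\mapsto -m-1$ visible in \eqref{Lp}--\eqref{Lm}: the induced involution $P$ commutes with $\mathcal{L}^{pr}$, so every eigenspace splits according to $Pf=\epsilon f$ with $\epsilon\in\{\pm1\}$, which is precisely what will produce the two families indexed by $\epsilon$. Encoding $f$ by the sequences $u_n=f(2n)$, $v_n=f(2n+1)$, together with their checked counterparts on the negative ray, converts $\mathcal{L}^{pr}f=\lambda f$ into the coupled two-term recurrences \eqref{l1}--\eqref{l2} (and their analogues), the two halves being linked across the origin by the sewing relations \eqref{se}.

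Next I would eliminate $u_n$ between \eqref{l1} and \eqref{l2} to obtain a single second-order recurrence whose characteristic equation is \eqref{speceq1}, namely $\sqrt2\bigl(4\lambda^2-\lambda+\tfrac32\bigr)=\xi+\xi^{-1}$; its general solution is $v_n=\alpha\xi^n+\beta\xi^{-n}$, and $u_n$ is then the fixed combination of $v_{n-1},v_n$ given in \eqref{un}. The essential dichotomy is the size of $\xi$. When $|\xi|>1$ square-summability forces $\alpha=0$, so only the decaying mode $v_n=\beta\xi^{-n}$ survives; substituting back into \eqref{l1}--\eqref{l2} and imposing \eqref{se} with $\epsilon=\beta/\check\beta\in\{\pm1\}$ collapses the system to the quadratic $\xi^2+\tfrac{\epsilon}{\sqrt2}\xi=1$, whose $|\xi|>1$ roots are $\xi_\epsilon=-\epsilon\sqrt2$ and hence the two point eigenvalues $\lambda_\epsilon=(5-2\epsilon)/4$. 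A direct geometric-series computation giving $\lVert f_\epsilon\rVert^2=12|\beta|^2$ then fixes the normalization $\beta=1/(2\sqrt3)$ and yields the unit eigenvectors \eqref{dfpe}--\eqref{dfpo} with the stated symmetry.

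When $|\xi|=1$ I write $\xi=e^{ix}$ with $x\in[0,\pi]$, so that \eqref{speceq1} rewrites as \eqref{mu1}, i.e.\ $\nu^2=\tfrac{25}{16}+\sqrt2\cos x$ under the substitution $2(1-\lambda)=\nu+\tfrac14$; this gives $\nu=\mu R_x$ and the generalized eigenvalues $\lambda_{\mu,x}=\tfrac78-\tfrac{\mu}{2}R_x$. Here both modes $\xi^{\pm n}$ persist, the eigenspace is two-dimensional, and the parity constraint \eqref{epsilon} together with the single surviving sewing relation \eqref{nse12} determines the ratio $\alpha/\beta$ as in \eqref{al/be}. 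Choosing $\alpha$ so that $\beta=\overline{\alpha}$ forces the generalized eigenvectors to be real and produces \eqref{cfpe}--\eqref{cfpo}; the proposition is then the organized statement of these two cases.

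The step I expect to be most delicate is the bookkeeping at the junction. After eliminating $u_n$ the interior recurrence is only valid away from the origin, so the sewing equations \eqref{se} act as boundary conditions, and one must check that they are \emph{equivalent} to the algebraic constraints on $(\alpha,\beta,\gamma,\delta)$ and on $\epsilon$ rather than over-determining the system. Verifying this consistency with the interior spectral equation—so that the boundary relations cut out exactly the eigenvalues claimed and no spurious conditions survive—is the crux; once it is confirmed, extracting the eigenvalues and, after normalization, the eigenvectors is routine.
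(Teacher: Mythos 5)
Your proposal follows essentially the same route as the paper: the same reduction to the coupled recurrences \eqref{l1}--\eqref{l2} with the sewing conditions \eqref{se}, the same dichotomy on $|\xi|$ for the characteristic root of \eqref{speceq1}, the same quadratic $\xi^2+\frac{\epsilon}{\sqrt{2}}\xi=1$ yielding the point spectrum, and the same substitution $2(1-\lambda)=\nu+\frac14$ and reality normalization $\beta=\overline{\alpha}$ for the continuous part. The only step the paper makes explicit that you omit is ruling out $\lambda=1$ separately (needed because recovering $u_n$ from $v_n$ via \eqref{un} divides by $2(1-\lambda)$), but that is a routine check within your framework.
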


The scalar products of the (generalized) eigenfunctions, interpreted in a distributional sense, allow us to determine the spectral measure associated with this operator.
\begin{proposition}\label{scalar_products1}
We have the following scalar products, for $\mu,\mu',\epsilon, \epsilon' \in \{\pm 1\}$ and $x,x'\in~[0,\pi]$,
 $$
 \langle f_{x,\mu,\epsilon},  f_{x',\mu',\epsilon'} \rangle=H_{\mu\epsilon}(x)\delta_{\mu,\mu'}\delta_{\epsilon,\epsilon'}\delta(x-x')
$$
where 
\begin{equation}\label{H_eps}
 H_{\nu}(x):=2\pi R_x\big(2R_x-\nu(2+\sqrt{2}\cos(x))\big)>0, \quad \nu=\pm1,
\end{equation}
and $\delta(x)$ is Dirac's delta function.
\end{proposition}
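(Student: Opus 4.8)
The plan is to treat the pairing $\langle f_{x,\mu,\epsilon}\mid f_{x',\mu',\epsilon'}\rangle=\sum_{m\in\Z}f_{x,\mu,\epsilon}(m)f_{x',\mu',\epsilon'}(m)$ (the eigenfunctions being real) as a distribution in $(x,x')\in[0,\pi]^2$ and to extract its three factors in turn. First I use the symmetry $f_{x,\mu,\epsilon}(-m-1)=\epsilon\,f_{x,\mu,\epsilon}(m)$ from Proposition~\ref{spectrum_L^pr} to fold the negative half-line onto the non-negative one: the contribution of $m\le-1$ equals $\epsilon\epsilon'$ times that of $m\ge0$, so the whole pairing equals $(1+\epsilon\epsilon')\sum_{m\ge0}f_{x,\mu,\epsilon}(m)f_{x',\mu',\epsilon'}(m)$. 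As $1+\epsilon\epsilon'$ is $2$ when $\epsilon=\epsilon'$ and $0$ otherwise, this already yields the factor $\delta_{\epsilon,\epsilon'}$ and reduces everything to $\epsilon=\epsilon'$ and a sum over $m\ge0$.

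Next I split $\sum_{m\ge0}$ into its even part $m=2n$ and odd part $m=2n+1$ and insert the explicit expressions~\eqref{cfpe},~\eqref{cfpo}. Every summand is then a product $\sin((n+a)x)\sin((n+b)x')$ with $a,b\in\{0,1\}$, so I invoke the distributional completeness relation of the Fourier sine basis, $\sum_{n\ge1}\sin(nx)\sin(nx')=\tfrac{\pi}{2}\delta(x-x')$ on $(0,\pi)$, whose shifted variants contribute $\tfrac{\pi}{2}\cos((a-b)x)\,\delta(x-x')$ on the diagonal. To see that nothing but a multiple of $\delta(x-x')$ survives (no principal-value pieces, no derivatives of $\delta$), I lean on the self-adjointness of $\mathcal{L}^{pr}$ in $\ell^2(\Z)$, which forces the pairing to be supported where the generalized eigenvalues agree; since $x\mapsto\lambda_{\mu,x}=\tfrac78-\tfrac{\mu}{2}R_x$ is strictly monotone on $(0,\pi)$ for each fixed $\mu$, that support is exactly $\{x=x',\ \mu=\mu'\}$, so the off-diagonal and $\mu\ne\mu'$ parts cancel and leave a pure multiple of $\delta_{\mu,\mu'}\,\delta(x-x')$.

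It remains to identify the coefficient of $\delta(x-x')$, the genuinely computational step. Writing $R=R_x$, $p=1+\tfrac{\epsilon}{4}$, $q=\tfrac14-\epsilon$, the even coefficients are $p-\epsilon\mu R$ and $\tfrac1{\sqrt2}$, the odd ones $q+\mu R$ and $-\tfrac{\epsilon}{\sqrt2}$; collecting the diagonal terms (including the factor $2$ above) shows the pairing equals $\pi$ times the bracket $p^2+q^2+1-2\epsilon(\mu+\mu')R+2\mu\mu'R^2+2\sqrt2\cos x-\sqrt2\,\epsilon(\mu+\mu')R\cos x$, times $\delta(x-x')$. The two identities $p-\epsilon q=2$ and $p^2+q^2+1=\tfrac{25}{8}$, together with $R^2=\tfrac{25}{16}+\sqrt2\cos x$ from~\eqref{R_s}, make this bracket vanish when $\mu\ne\mu'$ (confirming $\delta_{\mu,\mu'}$) and collapse to $2R\bigl(2R-\mu\epsilon(2+\sqrt2\cos x)\bigr)$ when $\mu=\mu'$, using $\tfrac{25}{8}+2R^2+2\sqrt2\cos x=4R^2$. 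Multiplying by $\pi$ gives exactly $H_{\mu\epsilon}(x)$ as in~\eqref{H_eps}, and positivity is immediate from $4R^2-(2+\sqrt2\cos x)^2=\tfrac94-2\cos^2x\ge\tfrac14>0$.

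The main obstacle is the middle step: justifying rigorously that the pairing is a pure multiple of $\delta(x-x')$ — that the shifted sine sums feed no principal-value or derivative-of-delta terms into the final answer — rather than merely formally tracking the diagonal singularity. I expect to discharge this by the self-adjointness and strict monotonicity of the eigenvalue map, which pin the support to the diagonal; the explicit bracket computation above then supplies the value $H_{\mu\epsilon}(x)$.
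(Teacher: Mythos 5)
Your skeleton matches the paper's proof closely: the folding via $f_{x,\mu,\epsilon}(-m-1)=\epsilon f_{x,\mu,\epsilon}(m)$ to produce the factor $2\delta_{\epsilon,\epsilon'}$, the reduction to sums of products $\sin((n+a)x)\sin((n+b)x')$ with $a,b\in\{0,1\}$, and the final algebra are all the same --- your bracket is literally the paper's $c_1+c_2+2\cos(x)c_3$ on the diagonal, and your simplification to $2R_x\bigl(2R_x-\epsilon\mu(2+\sqrt{2}\cos x)\bigr)$ together with the vanishing for $\mu\ne\mu'$ reproduces the paper's computation exactly; you even supply a positivity check for $H_\epsilon$ that the paper only asserts. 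The divergence is in the analytic middle step. The paper directly evaluates the two distributional sums \eqref{2sin} and \eqref{lastsum_final}; the structural point there is that the cross terms occur only in the symmetrized combination $\sum_n\bigl(2\sin(nx)\sin((n+1)x')+2\sin(nx')\sin((n+1)x)\bigr)$, and it is exactly this symmetrization that cancels the cotangent (principal-value) and constant parts of each individual shifted sum, leaving the pure $2\pi\cos(x)\delta(x-x')$. Your alternative --- localizing the support via $(\lambda_{\mu,x}-\lambda_{\mu',x'})\langle f_{x,\mu,\epsilon}\mid f_{x',\mu',\epsilon'}\rangle=0$, strict monotonicity of $x\mapsto\lambda_{\mu,x}$, and disjointness of the two bands --- is a legitimate and arguably cleaner route, and it yields $\delta_{\mu,\mu'}$ for free, where the paper instead gets it from the algebraic vanishing of $g(x)$.

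However, as stated your middle step has one genuine hole: support on the diagonal rules out principal-value pieces but does \emph{not} rule out $\delta'(x-x')$ and higher transversal derivatives, so you are not yet entitled to ``read off the coefficient of $\delta$'' from the shifted sine sums. To close this you must push the same identity further: by the structure theorem, a distribution of locally finite order supported on the diagonal is locally $\sum_k c_k(x)\,\delta^{(k)}(x-x')$, and since $\lambda_{\mu,x}-\lambda_{\mu,x'}$ vanishes to exactly first order transversally (as $\partial_x\lambda_{\mu,x}\ne0$ on $(0,\pi)$), multiplying $\delta^{(k)}$ by it produces a nonzero multiple of $\delta^{(k-1)}$ plus lower order, so the relation forces $c_k=0$ for all $k\ge1$. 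You should also justify the summation-by-parts identity $\langle\mathcal{L}^{pr}f_{x,\mu,\epsilon}\mid f_{x',\mu',\epsilon'}\rangle=\langle f_{x,\mu,\epsilon}\mid\mathcal{L}^{pr}f_{x',\mu',\epsilon'}\rangle$ for these bounded, non-$\ell^2$ sequences, where the boundary terms at infinity vanish only after smearing in $x,x'$ against test functions. With these two repairs your argument is complete and gives the same $H_{\mu\epsilon}(x)$ as the paper.
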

\begin{proof}
 
Let $\mu,\mu',\epsilon, \epsilon' \in \{\pm 1\}$ and $x,x' \in [0,\pi]$.
\begin{align*}
 \langle f_{x,\mu,\epsilon}, f_{x',\mu',\epsilon'} \rangle 
& = \sum_{m=0}^\infty \Big( f_{x,\mu,\epsilon}(m) f_{x',\mu',\epsilon'}(m) + \epsilon\epsilon' f_{x,\mu,\epsilon}(m) f_{x',\mu',\epsilon'}(m) \Big) \\
& = (1+\epsilon\epsilon') \sum_{n=0}^\infty \Big(f_{x,\mu,\epsilon}(2n) f_{x',\mu',\epsilon'}(2n) + f_{x,\mu,\epsilon}(2n+1) f_{x',\mu',\epsilon'}(2n+1)\Big) \\
& = 2\delta_{\epsilon,\epsilon'} \sum_{n=0}^\infty \Big( f_{x,\mu,\epsilon}(2n) f_{x',\mu',\epsilon}(2n) + f_{x,\mu,\epsilon}(2n+1) f_{x',\mu',\epsilon}(2n+1)\Big)
 \end{align*}
 where, in the first equality, we used the fact that the eigenvectors are real and in the last equality, the fact that 
 $$
 \epsilon\epsilon'=
\begin{cases}
 1 & \text{if } \epsilon=\epsilon' \\
 -1 & \text{if } \epsilon\neq\epsilon'
\end{cases}
$$
since $\epsilon,\epsilon'\in\{\pm1\}$.

To compute the sum above, we start by using formulas \eqref{cfpe} and \eqref{cfpo} and we collect terms with different products of sine functions and simplify the coefficients 
\begin{itemize}
 \item $2\sin(nx)\sin(nx')$: $\big(1+\frac{\epsilon}{4}-\epsilon\mu R_x\big) \big(1+\frac{\epsilon}{4}-\epsilon\mu' R_{x'}\big)+ \frac{1}{2}=:c_1(x,x')$
 \item $2\sin((n+1)x)\sin((n+1)x')$: $\big(\mu R_x+\frac{1}{4}-\epsilon\big)\big(\mu' R_{x'}+\frac{1}{4}-\epsilon\big)+ \frac{1}{2}=:c_2(x,x')$
 \item $2\sin(nx)\sin((n+1)x')$: $\frac{1+\frac{\epsilon}{4}-\epsilon\mu R_x}{\sqrt{2}} - \frac{\epsilon\big(\mu' R_{x'}+\frac{1}{4}-\epsilon\big)}{\sqrt{2}}=\sqrt{2}-\frac{\epsilon}{\sqrt{2}}(\mu R_x+\mu' R_{x'})=:c_3(x,x')$ \\
 \item $2\sin(nx')\sin((n+1)x)$: $\frac{1+\frac{\epsilon}{4}-\epsilon\mu' R_{x'}}{\sqrt{2}} - \frac{\epsilon\big(\mu R_{x}+\frac{1}{4}-\epsilon\big)}{\sqrt{2}}=\sqrt{2}-\frac{\epsilon}{\sqrt{2}}(\mu R_x+\mu' R_{x'})=c_3(x,x')$ 
\end{itemize}
Thus, we rewrite the scalar product above as 
\begin{align}
  \langle f_{x,\mu,\epsilon}, f_{x',\mu',\epsilon'} \rangle 
 = \  \delta_{\epsilon,\epsilon'} \Big( & c_1(x,x')\sum_{n=0}^\infty 2\sin(nx)\sin(nx') + c_2(x,x')\sum_{n=0}^\infty 2\sin((n+1)x)\sin((n+1)x') \notag \\
  + \ & c_3(x,x')\sum_{n=0}^\infty \big(2\sin(nx)\sin((n+1)x') + 2\sin(nx')\sin((n+1)x) \big) \Big).\label{sp_continuous}
 \end{align}
 We compute each sum separately using the following trigonometric identity
\begin{equation} \label{2sin_id}
  2\sin(a)\sin(b)=\cos(a-b)-\cos(a+b).
\end{equation}
 First, notice that the first and the second sum are equal using a change of summation variable by $n \mapsto n-1$.
Using the distributional form of the Poisson summation formula,
\begin{equation}\label{poisson_summation}
 \sum_{n\in\Z}e^{2\pi inx}=\sum_{k\in\Z}\delta(x+k)
\end{equation}
and the equality
\begin{equation} \label{Diracdelta}
 f(x)\delta(x-y)=f(y)\delta(x-y)
\end{equation}
for any continuous function $f$,
we obtain the following distributional identities:
\begin{align} 
& \sum_{n=0}^\infty 2\sin(nx)\sin(nx') =\pi \delta(x-x') \label{2sin}, \\ 
& \sum_{n=0}^\infty \Big( 2\sin(nx) \sin((n+1)x') + 2\sin(nx')\sin((n+1)x) \Big) =2\pi \cos(x) \delta(x-x'), \label{lastsum_final}
\end{align}
where $x,x'\in[0,\pi]$, see Appendix~\ref{appA} for details.

Finally, using \eqref{2sin} and \eqref{lastsum_final}, expression \eqref{sp_continuous} becomes
\begin{align}
 \langle f_{x,\mu,\epsilon},  f_{x',\mu',\epsilon'} \rangle =\pi g(x,x')\delta_{\epsilon,\epsilon'}\delta(x-x')=\pi g(x)\delta_{\epsilon,\epsilon'}\delta(x-x') \label{sp_g(x)}
\end{align} 
where 
$$
 g(x,x'):= c_1(x,x')+c_2(x,x')+2\cos(x)c_3(x,x'),
$$
$g(x):=g(x,x)$
and, in the second equality of \eqref{sp_g(x)}, we used \eqref{Diracdelta} for $g$.
Let us simplify the expression for $g(x)$:
\begin{align*}
 g(x) = 2\mu\mu'R_x^2-\epsilon R_x(\mu+\mu')(2+\sqrt{2}\cos(x))+\frac{25}{8}+2\sqrt{2}\cos(x).
\end{align*}
Since $\mu,\mu'\in\{\pm 1\}$, either $\mu=-\mu'$ or $\mu=\mu'$:
\begin{itemize}
 \item if $\mu=-\mu'$, then $g(x)=-2R_x^2+\frac{25}{8}+2\sqrt{2}\cos(x)=0$
 \item if $\mu=\mu'$, then $g(x)=2R_x\big(2R_x-\epsilon\mu(2+\sqrt{2}\cos(x))\big)$,
 \end{itemize}
 where we used the fact that $R_x^2=\frac{25}{16}+\sqrt{2}\cos(x)$.
Therefore, we obtain
$$
g(x)=2R_x\big(2R_x-\epsilon\mu(2+\sqrt{2}\cos(x))\big)\delta_{\mu,\mu'}.
$$ 
Finally, from \eqref{sp_g(x)}, we have 
$$
 \langle f_{x,\mu,\epsilon},  f_{x',\mu',\epsilon'} \rangle=H_{\mu\epsilon}(x)\delta_{\mu,\mu'}\delta_{\epsilon,\epsilon'}\delta(x-x')
$$
where $H_{\nu}(x)$ is given in~\eqref{H_eps}.
\end{proof}

\begin{proposition}\label{completeness1}  
The following completeness condition is satisfied: for all $m,n\in\Z$,
$$
\sum_{\epsilon\in\{\pm1\}}f_\epsilon(m)f_\epsilon(n)+\int_0^\pi \sum_{\mu,\epsilon\in\{\pm1\}}f_{x,\mu,\epsilon}(m)f_{x,\mu,\epsilon}(n)\frac{\mathrm{d}x}{H_{\mu\epsilon}(x)}=\delta_{m,n}.
$$
\end{proposition}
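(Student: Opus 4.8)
The plan is to read this completeness relation as the resolution of the identity for the bounded self-adjoint operator $\mathcal{L}^{pr}$ on $\ell^2(\Z)$: together with the orthogonality relations of Proposition~\ref{scalar_products1}, it asserts that the map sending a sequence to its discrete and continuous generalized Fourier coefficients is a unitary equivalence onto the spectral side. The structural backbone is the reflection $P$, $Pf(n)=f(-n-1)$, introduced before~\eqref{epsilon}, which commutes with $\mathcal{L}^{pr}$ and splits $\ell^2(\Z)=H_+\oplus H_-$ into its $\pm1$ eigenspaces; each eigenfunction $f_\epsilon$ and $f_{x,\mu,\epsilon}$ lies entirely in the sector $H_\epsilon$, so the left-hand side is block-diagonal in $\epsilon$. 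Using $f_{x,\mu,\epsilon}(-m-1)=\epsilon f_{x,\mu,\epsilon}(m)$ and the analogous relation for $f_\epsilon$, I would first reduce to $m,n\ge0$: sectors with a negative argument are recovered by pulling out the factor $\epsilon$, and the vanishing of mixed-sign contributions is exactly the statement that the per-sector kernel reproduces $\tfrac12(\delta_{m,n}+\epsilon\,\delta_{m,-n-1})$, whose $\epsilon$-sum is $\delta_{m,n}$. It therefore suffices to prove, for each fixed $\epsilon$ and all $m,n\ge0$, the per-sector identity
\[
f_\epsilon(m)f_\epsilon(n)+\int_0^\pi\sum_{\mu\in\{\pm1\}}f_{x,\mu,\epsilon}(m)f_{x,\mu,\epsilon}(n)\,\frac{\mathrm{d}x}{H_{\mu\epsilon}(x)}=\tfrac12\,\delta_{m,n},
\]
which is a half-line Fourier inversion formula carrying one bound state and a continuous band.

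For the continuous part I would substitute~\eqref{cfpe} and~\eqref{cfpo} and perform the sum over $\mu$ first. Since $f_{x,\mu,\epsilon}$ depends on $\mu$ only through $\mu R_x$, and $H_{\mu\epsilon}$ in~\eqref{H_eps} likewise, the combination $\sum_\mu(\cdots)/H_{\mu\epsilon}(x)$ rationalizes: writing $c=2+\sqrt2\cos(x)$, one has the identity $4R_x^2-c^2=\tfrac14(1+8\sin^2(x))$, so the two denominators $2R_x\mp\epsilon c$ multiply to the $R_x$-free factor $\tfrac14(1+8\sin^2(x))$, while in the numerator the square roots cancel because $f_{x,+,\epsilon}f_{x,+,\epsilon}+f_{x,-,\epsilon}f_{x,-,\epsilon}$ is even in $R_x$. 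The upshot is that the integrand becomes a ratio of trigonometric polynomials in $x$, with denominator exactly the factor $1+8\sin^2(x)$ that surfaces in the heat-kernel coefficients~\eqref{eqK1}, and a numerator which, after expanding the products of $\sin(nx),\sin((n+1)x)$ via~\eqref{2sin_id}, is a finite combination of $\sin(kx)\sin(lx)$ with coefficients rational in $\cos(x)$.

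The final step is to evaluate $\int_0^\pi$ of this rationalized integrand. I would extend to $[-\pi,\pi]$ by evenness and pass to $z=e^{ix}$ on the unit circle, computing by residues. Here the geometry is favorable: in the $z$-plane the denominator $1+8\sin^2(x)$ has zeros at $z^2\in\{2,\tfrac12\}$, that is at $z=\pm\sqrt2$ (outside the disk) and $z=\pm1/\sqrt2$ (inside), and these are precisely the bound-state solution $\xi_\epsilon=-\epsilon\sqrt2$ of~\eqref{speceq1} and its reciprocal. The poles of the weight inside the unit disk therefore contribute a residue whose value must coincide, up to sign, with the discrete term $f_\epsilon(m)f_\epsilon(n)$, which from~\eqref{dfpe}--\eqref{dfpo} is the elementary geometric expression in powers of $(-\epsilon\sqrt2)^{-1}$; the remaining contour contribution supplies the diagonal $\tfrac12\delta_{m,n}$. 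Verifying this cancellation sector by sector, across the parity cases ($m,n$ both even, both odd, or mixed), and checking that the off-diagonal ($m\neq n$) terms vanish is the delicate bookkeeping, and the matching of the interior residue against the bound-state eigenfunction is the conceptual heart of the argument and the step I expect to be the main obstacle.

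As an alternative that sidesteps the trigonometric integrals, one could build the resolvent $(\mathcal{L}^{pr}-z)^{-1}$ directly. Because $\mathcal{L}^{pr}$ is a Jacobi-type operator with two-periodic coefficients, its Green's function is assembled from the decaying solutions $\xi^{\mp n}$ of the recursion behind~\eqref{speceq1}, and Stone's formula recovers the spectral measure from the boundary values of $\langle\delta_m,(\mathcal{L}^{pr}-\lambda\mp\imi0)^{-1}\delta_n\rangle$ across the spectrum: the jump across the continuous band reproduces the density $1/H_{\mu\epsilon}(x)$, the poles at $\lambda=\tfrac34,\tfrac74$ reproduce the discrete eigenfunctions, and completeness then follows automatically from the spectral theorem, trading the sine integrals for a single residue computation on the resolvent.
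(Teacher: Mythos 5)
Your plan is correct and follows essentially the same route as the paper: after summing over $\mu$ (and $\epsilon$) the square roots $R_x$ cancel and the integrand becomes a trigonometric rational function with denominator $-(4\cos(2x)-5)=1+8\sin^2(x)$, whose poles at $z^2\in\{2,\tfrac12\}$ produce exactly the geometric terms that cancel the bound-state contribution $f_\epsilon(m)f_\epsilon(n)$, leaving $\delta_{m,n}$. The only cosmetic difference is that the paper evaluates the resulting integral by expanding $\frac{1}{4\cos(2x)-5}=-\tfrac13\sum_{l\in\Z}2^{-|l|}e^{2\imi xl}$ and invoking Fourier orthogonality rather than by contour residues, which is the same computation in different clothing.
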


\begin{proof}
 Let us denote
\begin{equation}
 F_x(m,n):=\sum_{\mu,\epsilon\in\{\pm1\}}f_{x,\mu,\epsilon}(m)f_{x,\mu,\epsilon}(n)\frac{1}{H_{\mu\epsilon}(x)} \label{F_x}
\end{equation}
 and
\begin{equation}
  C(m,n):=\int_0^\pi F_x(m,n) \ \mathrm{d}x +\sum_{\epsilon\in\{\pm1\}}f_\epsilon(m)f_\epsilon(n). \label{C_x}
\end{equation}
 We need to check separately the different cases depending on the parity of $m$ and $n$. 
The cases where both entries are negative are equivalent to the cases with both positive entries because of the symmetry property of the (generalized) eigenfunctions
$$
f_\epsilon(-m-1)=\epsilon f_\epsilon(m), \quad f_{x,\mu, \epsilon}(-m-1)=\epsilon f_{x,\mu, \epsilon}(m), \quad \forall m\in\Z.
$$
We will show in detail only the computation of $C(2m,2n+1)$ with $m,n\in\Z_{\ge0}$ since 
the computations of the other cases are analogous.
First, we compute the sum in \eqref{F_x} with the result
\begin{equation}\label{eq:positive-even-odd}
F_x(2m,2n+1)=\frac{\sqrt{2}}{\pi} \cdot \frac{2\big( \sin(mx)\sin(nx)-\sin((m+1)x)\sin((n+1)x)\big)}{4\cos(2x)-5}.
\end{equation}
Next, we transform  the numerator and denominator of~\eqref{eq:positive-even-odd} separately. The identity for the product of sines
$$
2\sin(x)\sin(y)=\cos(x-y)-\cos(x+y)
$$
allows us to rewrite the numerator of~\eqref{eq:positive-even-odd}  as
\begin{align}
 2\big( \sin(mx)\sin(nx) & -\sin((m+1)x)\sin((n+1)x)\big) = \cos(x(m+n+2))-\cos(x(m+n)) \notag \\
 & = \operatorname{Re}(e^{ix(m+n+2)}-e^{ix(m+n)}) = \operatorname{Re}(e^{ix(k+2)}-e^{ixk}), \label{num_ee}
\end{align}
where $k:=m+n$.
Denoting $\xi:=e^{2ix}$, we rewrite the denominator of~\eqref{eq:positive-even-odd}  as
$$
4\cos(2x)-5=2(\xi+\xi^{-1})-5=\frac{2}{\xi}(\xi-2)(\xi-\frac{1}{2}).
$$
Using the fraction decomposition for the inverse and the geometric series (expanding for $|\xi|=1$), we obtain
\begin{align}
 \frac{1}{4\cos(2x)-5} &=\frac{\xi/2}{(\xi-2)(\xi-\frac{1}{2})} 
=-\frac{1}{3}\Big(\frac{1}{1-\frac{\xi}{2}}+\frac{1}{2\xi}\frac{1}{1-\frac{1}{2\xi}}\Big) =-\frac{1}{3}\Big(\sum_{l=0}^{\infty}\big(\frac{\xi}{2}\big)^l + \sum_{l=0}^{\infty} \big(\frac{1}{2\xi}\big)^{l+1} \Big) \notag \\ 
&=-\frac{1}{3}\Big(\sum_{l=0}^{\infty}2^{-l}\xi^l + \sum_{l=-\infty}^{-1} (2\xi)^l \Big) 
=-\frac{1}{3} \sum_{l\in\Z}2^{-|l|}\xi^l=-\frac{1}{3} \sum_{l\in\Z}2^{-|l|}e^{2ixl}. \label{denom_ee}
\end{align}
Using \eqref{num_ee} and \eqref{denom_ee}, we can rewrite $F_x$ as follows:
\begin{align*}
 F_x(2m,2n+1)
 &=\frac{\sqrt{2}}{\pi} \operatorname{Re}\Big(\frac{1}{4\cos(2x)-5}(e^{ix(k+2)}-e^{ixk})\Big) \\
 &=-\frac{\sqrt{2}}{3\pi} \sum_{l\in\Z}2^{-|l|} \operatorname{Re}(e^{ix(2l+k+2)}-e^{ix(2l+k)}) \\
 &=-\frac{\sqrt{2}}{3\pi} \sum_{l\in\Z}2^{-|l|} \big( \cos(x(2l+k+2))-\cos(x(2l+k)) \big).
\end{align*}
Using the fact that 
$$
\int_0^\pi \cos(x(2l+k)) \mathrm{d}x =
\begin{cases}
\pi \delta_{l,-k/2} & k \ \text{even} \\
0 & \text{else},
\end{cases}
$$
we can now compute the integral (recall that $k=m+n$)
\begin{align*}
 \int_0^\pi F_x(2m,2n+1) \mathrm{d}x & =-\frac{\sqrt{2}}{3\pi} \sum_{l\in\Z}2^{-|l|} \int_0^\pi \cos(x(2l+k+2))-\cos(x(2l+k)) \mathrm{d}x \\
 &= -\frac{\sqrt{2}}{3\pi}
\begin{cases}
 \pi 2^{-(k+2)/2} - \pi 2^{-k/2} & k \ \text{even} \\
 0 & \text{else}
\end{cases}
\\
&= 
\begin{cases}
\frac{\sqrt{2}}{6} 2^{-k/2}  & k \ \text{even} \\
 0 & \text{else}.
 \end{cases}
\end{align*}
Finally, using the definition of the eigenfunctions $f_\epsilon(m)$, we compute the sum in \eqref{C_x}
\begin{align*}
 \sum_{\epsilon\in\{\pm1\}}f_\epsilon(2m)f_\epsilon(2n+1) & =\sum_{\epsilon\in\{\pm1\}}-\frac{1}{6\sqrt{2}}(-\sqrt{2}\epsilon)^{-k}=-\frac{1}{6\sqrt{2}}(-1)^k\sqrt{2}^{-k}(1+(-1)^k) \\
 &=
 \begin{cases}
  -\frac{\sqrt{2}}{6}\sqrt{2}^{-k} & k \ \text{even} \\
 0 & \text{else}.
\end{cases}
\end{align*}
Thus, we have shown that $C(2m,2n+1)= 0$.
\end{proof}

\subsection{Spectral theorem for the projected Laplacian}

 Let $J=J_c\sqcup J_d$, where $J_c:=[0,\pi]\times\{\pm1\}^2$ and $J_d:=\{\pm1\}$, be a measured space provided with the Borel $\sigma$-algebra $\mathcal{B}_J$ and the measure $\eta$ defined by 
 $$
 \eta(B)=\int_0^\pi \sum_{\mu,\epsilon\in\{\pm1\}} \chi_{B\cap J_c}(x,\mu,\epsilon) \frac{\mathrm{d}x}{H_{\mu\epsilon}(x)}+\sum_{\epsilon\in\{\pm1\}} \chi_{B\cap J_d}(\epsilon), \quad \forall B\in \mathcal{B}_J,
 $$
 where $H_\nu(x)$ is defined in~\eqref{H_eps}.
 Points in $J$ belong either to the continuous component $J_c$ or the discrete component $J_d$. Elements of $J_c$ are denoted by triplets $(x,\mu,\epsilon)$, while elements of $J_d$ are denoted by a single label $\epsilon$.
 Denote $L^2(J)$ the complex Hilbert space of square-integrable functions on $J$ (with respect to the measure $\eta$). 
 
 We define two functions on $J$ and a function on $J\times \Z$ as follows:
 $$
 \lambda:J\to\R
 $$
 defined by 
 $$
\lambda(x,\mu,\epsilon)=\lambda_{x,\mu}=\frac{7}{8}-\frac{\mu}{2}\sqrt{\frac{25}{16}+\sqrt{2}\cos(x)}, \quad \lambda(\epsilon)=\lambda_\epsilon=\frac{5-2\epsilon}{4},
 $$
 $$
 \sigma: J \to \R
 $$
 defined by
 $$
 \sigma(x,\mu,\epsilon)=\epsilon, \quad \sigma(\epsilon)=\epsilon
 $$
 and 
 $$
 f: J \times \Z \to \R
 $$
 defined, for $m\ge0$, by
\begin{align*}
& f(x,\mu,\epsilon,m)=f_{x,\mu,\epsilon}(m)=
 \begin{cases}
 (1+\frac{\epsilon}{4}-\epsilon\mu R_x)\sin(nx)+\frac{1}{\sqrt{2}}\sin((n+1)x) & \text{if } m=2n  \\
 (\mu R_x+\frac{1}{4}-\epsilon)\sin((n+1)x)-\frac{\epsilon}{\sqrt{2}}\sin(nx) & \text{if } m=2n+1,
\end{cases}
\\
& f(\epsilon,m)=f_\epsilon(m)=
\begin{cases}
 \frac{-1}{\sqrt{6}}(-\epsilon\sqrt{2})^{-n} & \text{if } m=2n \\
 \frac{1}{2\sqrt{3}}(-\epsilon\sqrt{2})^{-n} & \text{if } m=2n+1,
\end{cases}
\end{align*}
and for $m<0$, by
$$
f(j,m)=\sigma(j)f(j,-m-1).
$$
With Propositions \ref{spectrum_L^pr}, \ref{scalar_products1} and \ref{completeness1}, we have proven the following spectral theorem for the $\pi$-projected Laplacian $\mathcal{L}^{pr}$.
\begin{theorem}[Spectral theorem for the $\pi$-projected Laplacian] \label{spectral_thm_1}
Let $\{e_n\}_{n\in\Z}$ be the canonical basis of $\ell^2(\Z)$.
The linear map $U: \ell^2(\Z) \to L^2(J)$
 defined by 
 $$
(Ue_n)(j)=f(j,n)
$$
 is a unitary equivalence such that 
 $$
 U\mathcal{L}^{pr}U^{-1}=M_{\lambda},
 $$
 where 
 $\mathcal{L}^{pr}$ is the $\pi$-projected Laplacian on $\Z$ and $M_{\lambda}$ is the multiplication operator by the function $\lambda$, defined by its action on functions $g\in L^2(J)$, as
 $$
 (M_{\lambda}g)(j)=\lambda(j)g(j).
 $$
\end{theorem}

\begin{corollary}
 The spectrum of the $\pi$-projected Laplacian $\mathcal{L}^{pr}$ is given by
$$
\operatorname{Sp}(\mathcal{L}^{pr})= I_0 \sqcup\Big\{\frac{3}{4}\Big\}\sqcup I_1 \sqcup\Big\{\frac{7}{4}\Big\}
$$ 
where 
$$
I_0=\Big[\lambda_0,\frac74-\lambda_1\Big], \quad I_1=\Big[\lambda_1,\frac74-\lambda_0\Big],
$$
with
$$
\lambda_0=\frac78-\frac12\sqrt{\frac{25}{16}+\sqrt{2}}=0.01234\dots,\quad \lambda_1=\frac78+\frac12\sqrt{\frac{25}{16}-\sqrt{2}}=1.0675\dots.
$$
\end{corollary}

\begin{proposition} \label{prop:hk_proj}
The heat operator of the $\pi$-projected Laplacian $\mathcal{L}^{pr}$ of $L_\infty$ is given by 
 $$
 h_t^{pr}=e^{-t\mathcal{L}^{pr}}=\int_{\R} e^{-t\lambda} \mathrm{d}\nu^{pr}(\lambda),
 $$
 where $\nu^{pr}$ is the spectral measure associated to the $\pi$-projected Laplacian $\mathcal{L}^{pr}$ in $\ell^2(\Z)$. 
 \\
 Explicitly, $\nu^{pr}$ is a projection-valued measure defined on the Borel $\sigma$-algebra on $\R$, that is, $\nu^{pr}(\lambda):=\nu^{pr}((-\infty,\lambda])$ is a self-adjoint projection operator in $\ell^2(\Z)$ acting as 
 $$
 (\nu^{pr}(\lambda)g)(m)=\sum_{n\in \Z} \nu^{pr}(\lambda)(m,n)g(n), \quad g\in \ell^2(\Z),
 $$
 where the matrix coefficients are defined as 
\begin{multline}\label{proj_val_nu}
\nu^{pr}(\lambda)(m,n) =  \chi_{(-\infty,\lambda]} \Big(\frac 34 \Big) f(1,n) f(1,m) + \chi_{(-\infty,\lambda]} \Big(\frac 74 \Big) f(-1,n) f(-1,m) \\
   + \int_0^\pi \chi_{(-\infty,\lambda]\cap I_0} \Big( \frac{7}{8}-\frac{R_x}{2} \Big) \sum_{\epsilon\in\{\pm1\}} f(x,\epsilon,1,n) f(x,\epsilon,1,m) \frac{\mathrm{d}x}{H_{\epsilon}(x)} \\
 + \int_0^\pi \chi_{(-\infty,\lambda]\cap I_1} \Big( \frac{7}{8}+\frac{R_x}{2} \Big) \sum_{\epsilon\in\{\pm1\}} f(x,\epsilon,-1,n) f(x,\epsilon,-1,m) \frac{\mathrm{d}x}{H_{-\epsilon}(x)},
\end{multline}
where $H_\epsilon(x)$ is defined in~\eqref{H_eps}.
\end{proposition}

\begin{proof}
 From Theorem~\ref{spectral_thm_1}, we have 
 $$
 \mathcal{L}^{pr}=U^*M_{\lambda}U
 $$
 which implies that the heat kernel is given by 
 $$
 h_t^{pr}=e^{-t\mathcal{L}^{pr}}=U^*e^{-tM_{\lambda}}U, 
 $$
 so that its matrix coefficients are calculated as follows:
\begin{align*}
  h_t^{pr}(m,n)&=\langle e_m, h_t^{pr}e_n \rangle = \langle Ue_m, e^{-tM_{\lambda}}Ue_n \rangle = \int_J (Ue_m)(j) (e^{-tM_{\lambda}}Ue_n)(j)\mathrm{d}\eta(j) \\
  &=\int_J e^{-t\lambda(j)} f(j,m) f(j,n)\mathrm{d}\eta(j) \\
&= e^{-t\frac{3}{4}} f(1,m)f(1,n) +e^{-t\frac{7}{4}} f(-1,m)f(-1,n) \\
 & \quad + \int_0^\pi e^{-t(\frac{7}{8}-\frac{R_x}{2})} \sum_{\epsilon\in\{\pm1\}} f(x,1,\epsilon,m) f(x,1,\epsilon,n) \frac{\mathrm{d}x}{H_{\epsilon}(x)} \\
 & \quad + \int_0^\pi e^{-t(\frac{7}{8}+\frac{R_x}{2})} \sum_{\epsilon\in\{\pm1\}} f(x,-1,\epsilon,m) f(x,-1,\epsilon,n) \frac{\mathrm{d}x}{H_{-\epsilon}(x)} \\
 &=\int_{\R} e^{-t\lambda} \mathrm{d}\nu^{pr}(\lambda)(m,n),
\end{align*}
with $\nu^{pr}(\lambda)(m,n)$ defined in \eqref{proj_val_nu}.
The spectrum is given by the support of the integral.
\end{proof}

We now have all the preparations for proving Theorem~\ref{K1}.
\begin{proof}[Proof of Theorem~\ref{K1}]
Let $h_t=e^{-t\mathcal{L}}$ be the heat operator of the Laplacian $\mathcal{L}$ on $\Gamma$. 
Then, formula~\eqref{eq6} from Proposition~\ref{rel_covering_heat_kernel}, implies that the function $k_t(x)$ in~\eqref{heat_kernel_det}, is given by
$$
k_t(x)=K_t(\pi(x)), \quad K_t(n):=\sqrt{2}^{-\left\lceil\frac{n}{2}\right\rceil} h^{pr}_t(0,n),
$$
where we use the fact that $|\pi^{-1}(n)|=2^{\left\lceil\frac{n}{2}\right\rceil}$. More specifically, using Proposition~\ref{prop:hk_proj}, we obtain
\begin{align*}
 K_t(n) 
 = \ & \sqrt{2}^{-\left\lceil\frac{n}{2}\right\rceil} \Big( e^{-t\frac{3}{4}} f(1,n)f(1,0) +e^{-t\frac{7}{4}} f(-1,n)f(-1,0) \\
 & \quad + \int_0^\pi e^{-t(\frac{7}{8}-\frac{R_x}{2})} \sum_{\epsilon\in\{\pm1\}} f(x,1,\epsilon,n) f(x,1,\epsilon,0) \frac{\mathrm{d}x}{H_{\epsilon}(x)} \\
 & \quad + \int_0^\pi e^{-t(\frac{7}{8}+\frac{R_x}{2})} \sum_{\epsilon\in\{\pm1\}} f(x,-1,\epsilon,n) f(x,-1,\epsilon,0) \frac{\mathrm{d}x}{H_{-\epsilon}(x)} \Big),
\end{align*}
and the coefficients $\alpha_n, \beta_n$ and $\gamma_n^{\pm}(s)$ in~\eqref{eqK1} are thus given by
\begin{align*}
 & \alpha_n = \sqrt{2}^{-\left\lceil\frac{n}{2}\right\rceil} f(1,n)f(1,0) \\
 & \beta_n = \sqrt{2}^{-\left\lceil\frac{n}{2}\right\rceil} f(-1,n)f(-1,0) \\
 & \gamma_n^{\pm}(s) = \sqrt{2}^{-\left\lceil\frac{n}{2}\right\rceil} \sum_{\epsilon\in\{\pm1\}} f(s,\mp1,\epsilon,n) f(s,\mp1,\epsilon,0) \frac{1}{H_{\mp\epsilon}(s)}.
\end{align*}
\end{proof}

\begin{remark}
  A referee suggests there might be an alternative approach using Hecke algebras, more in line with Rivin's paper \cite{MR2587834}.
\end{remark}

\section*{Funding}
This work was supported by the Swiss NSF Grants [200020-200400, 200021-212864] and the Swedish Research Council Grant [104651320].

\section*{Acknowledgments} 
We thank Laurent Bartholdi, Pierre de la Harpe, Rostislav Grigorchuk, Jay Jorgenson and Tatiana Nagnibeda for valuable discussions, and Emmanuel Kowalski for pointing out reference \cite{MR4418463}. We are also grateful to Michael Magee for several insightful comments, in particular for predicting the existence of primes $p$ with spectral gap below the value $\lambda_0$.

\appendix
\section{Derivation of two distributional identities} \label{appA}

Let us show \eqref{2sin}. We have
\begin{align*}
 \sum_{n=0}^\infty 2\sin(nx)\sin(nx') &= \sum_{n=0}^\infty \big( \cos(n(x-x'))-\cos(n(x+x')) \big) \\
 &= \frac{1}{2} \sum_{n\in\Z} \big( \cos(n(x-x'))-\cos(n(x+x')) \big) \\ 
 &= \frac{1}{2} \operatorname{Re} \Big( \sum_{n\in\Z} \big(e^{in(x-x')}-e^{in(x+x')}\big) \Big).
\end{align*} 
Using \eqref{poisson_summation} and $\delta(\frac{x}{a})=a\delta(x)$, for $a\in \R_{>0}$, we have 
\begin{equation} \label{poisson1}
 \sum_{n\in\Z}e^{in(x\pm x')}=2\pi\sum_{k\in\Z}\delta(x\pm x'+2\pi k).
\end{equation}
This, together with the fact that $x,x'\in[0,\pi]$, gives
\begin{equation} \label{dist1}
 \sum_{n=0}^\infty 2\sin(nx)\sin(nx') = \pi \big( \sum_{k\in\Z}\delta(x-x'+2\pi k) + \sum_{k\in\Z}\delta(x+x'+2\pi k) \big)=\pi \delta(x-x'),
\end{equation}
where the second sum vanishes as a distribution over $[0,\pi]^2$.

Let us show \eqref{lastsum_final}. Using \eqref{2sin_id}, we have
\begin{equation}
  \sum_{n=0}^\infty \Big( 2\sin(nx)\sin((n+1)x') + 2\sin(nx')\sin((n+1)x) \Big) \label{lastsum_expr}
 \end{equation} 
 \begin{equation*}
  =  \sum_{n=0}^\infty \Big( \cos(n(x-x')-x')-\cos(n(x+x')+x') + \cos(n(x'-x)-x)-\cos(n(x+x')+x) \Big) \notag 
   \end{equation*} 
 \begin{equation*}
  =  \big(\cos(x')+\cos(x)\big) \sum_{n=0}^\infty \Big( \cos(n(x-x'))-\cos(n(x+x'))\Big) \notag 
   \end{equation*} 
 \begin{equation*}
  +  \sum_{n=0}^\infty \Big(\sin(n(x-x'))\big(\sin(x')-\sin(x) \big) +\sin(n(x+x'))\big(\sin(x')+\sin(x)\big) \Big) \label{lastsum}. 
\end{equation*}
The first sum in the obtained expression was already computed in \eqref{dist1}:
\begin{equation} \label{last_first_sum}
 \sum_{n=0}^\infty \Big( \cos(n(x-x'))-\cos(n(x+x')) \Big)= \sum_{n=0}^\infty 2\sin(nx)\sin(nx')=\pi \delta(x-x').
\end{equation}
Let us show that the second sum vanishes. Denoting $y=x+x'$ and $z=x-x'$, it becomes
\begin{align*}
& \sum_{n=0}^\infty \Big(\sin(nz)\big(\sin(\frac{y-z}{2})-\sin(\frac{y+z}{2}) \big) +\sin(ny)\big(\sin(\frac{y-z}{2})+\sin(\frac{y+z}{2})\big) \Big) \\
&= 2\sum_{n=0}^\infty \Big( \sin(ny) \sin(\frac y2)\cos(\frac z2) - \sin(nz)\sin(\frac z2)\cos(\frac y2) \Big)
\end{align*} 
where we used the sine of sum formula. Observing that the second term is the negative first term with the exchange $y\leftrightarrow z$, we only analyze the first term. Using \eqref{2sin_id}, we obtain
\begin{align*}
 \cos(\frac z2) \sum_{n=0}^\infty 2\sin(ny) \sin(\frac y2) &=\cos(\frac z2) \sum_{n=0}^\infty \Big( \cos((n-\frac12)y)- \cos((n+\frac12)y) \Big)\\
 &= \cos(\frac z2) \Big( \sum_{n=-1}^\infty \cos((n+\frac12)y)-\sum_{n=0}^\infty \cos((n+\frac12)y) \Big)=\cos(\frac z2)\cos(\frac y2),
\end{align*}
which symmetric in $y$ and $z$.
Finally, we conclude using \eqref{Diracdelta}
\begin{align*}
  \sum_{n=0}^\infty \Big( 2\sin(nx) & \sin((n+1)x') + 2\sin(nx')\sin((n+1)x) \Big) =2\pi \cos(x) \delta(x-x').
\end{align*}


\end{document}